\newtheorem{prop}{Proposition}
\newtheorem{thm}{Theorem}
\newtheorem{lem}{Lemma}
\newtheorem{rem}{Remark}
\newtheorem{cor}{Corollary}
\newtheorem{dfn}{Definition}
\newcommand{\dH}[1]{d\mathcal{H}^{#1}}
\newcommand{\wt}[1]{\widetilde{#1}}
\newcommand{\pOmega}[0]{\partial\Omega}
\newcommand{\closure}[1]{\overline{#1}}
\newcommand{\extR}[0]{\mathbb{R}\cup\{\infty\}}
\newcommand{\subdiff}[2]{\partial #1 (#2)}
\newcommand{\TV}[2]{\int_{#1}|\nabla{#2}|}
\newcommand{\iter}[3]{{#1}^{(#2)}_{#3}}
\newcommand{\argmin}{\mathop{\rm arg~min}\limits}
\newcommand{\deriv}[1]{\frac{d}{d #1}}
\newcommand{\partialDeriv}[4]{(#1_{#2})_{#3,#4}}
\newcommand{\script}[2]{\mathcal{#1}_{#2}}
\newcommand{\chambolleEnergy}[2]{E^{#1}_#2}
\newcommand{\geodis}[1]{d_{\Omega,#1}}
\title{\textbf{On a minimizing movement scheme for mean curvature flow with prescribed contact angle in a curved domain and its computation}}
\author{Tokuhiro Eto \thanks{Graduate School of Mathematical Sciences, The University of Tokyo, Komaba 3-8-1, Meguro, Tokyo 153-8914, Japan. E-mail:tokuhiro\_eto@yahoo.co.jp} \and Yoshikazu Giga \thanks{Graduate School of Mathematical Sciences, The University of Tokyo, Komaba 3-8-1, Meguro, Tokyo 153-8914, Japan. E-mail:labgiga@ms.u-tokyo.ac.jp}}
\date{\today}
\begin{document}
\maketitle

\begin{abstract}
We introduce a capillary Chambolle type scheme for mean curvature flow with prescribed contact angle.
 Our scheme includes a capillary functional instead of just the total variation.
 We show that the scheme is well-defined and has consistency with the energy minimizing scheme of Almgren-Taylor-Wang type.
 Moreover, for a planar motion in a strip, we give several examples of numerical computation of this scheme based on the split Bregman method instead of a duality method.
\end{abstract}

{\small
\textbf{Keywords}\ -\ Mean curvature flow,\ Contact angle problem,\ Capillary functional,\ Split Bregman method,\ Chambolle's scheme
}

\section{Introduction}
In this study, we consider the mean curvature flow equation with prescribed contact angle condition of the form
\begin{equation}
    \begin{cases}
        V = -\operatorname{div}_{\Gamma_t}\mathbf{n}\ \ \mbox{on}\ \ \Gamma_t \cap \Omega\ \ \mbox{for}\ \ t\geq 0,\\
        \angle(\mathbf{n},\mathbf{n}_\Omega) = \theta(t,\cdot)\ \ \mbox{on}\ \ \partial\Gamma_t\cap\pOmega\ \ \mbox{for}\ \ t\geq 0,
    \end{cases}\tag{MCFB}\label{MCFB}
\end{equation}
where $\Omega\subset\mathbb{R}^d$ is a smooth bounded domain and $\mathbf{n}_\Omega$ is the unit normal velocity
vector field on $\pOmega$.
 Here, $\{\Gamma_t\}_t$ is a time evolving hypersurface to be determined and $\mathbf{n}$ represents the outward unit normal vector field of $\Gamma_t$;
 $V$ denotes the velocity of $\Gamma_t$ in the direction of $\mathbf{n}$, which is the outward unit normal vector to $\Gamma_t$, and $\theta$ is a given function on $[0,T]\times\pOmega$ 
that describes the contact angle between $\Gamma_t$ and $\pOmega$ for each $t\geq 0$.
 Here, $\operatorname{div}_{\Gamma_t}$ denotes the surface divergence so that $-\operatorname{div}_{\Gamma_t}\mathbf{n}$ becomes the ($d-1$ times) mean curvature of $\Gamma_t$ in the direction of $\mathbf{n}$.

For the mean curvature flow equation in $\mathbb{R}^d$, Almgren, Taylor and Wang \cite{AlmregTaylorWang1993} introduced a time discrete approximation of the solution which is often called the Almgren-Taylor-Wang scheme;
 a similar scheme is given by Luckhaus and Sturzenhecker \cite{LS}.
 However, one has to minimize a non-convex problem for each time step.
 In \cite{Chambolle2004}, Chambolle introduced another scheme which is based on a strict convex problem and the solution of each step chooses one of minimizers of the Almgren-Taylor-Wang's functional.

One of goals of this paper is to extend Chambolle's scheme to the problem \eqref{MCFB} which includes the prescribed contact angle condition.
 We call our scheme the capillary Chambolle type scheme.
 We shall show that the scheme is well-defined, and it chooses one of minimizers of the corresponding Almgren-Taylor-Wang's functional. 
 We shall explain them more explicitly.

Let us first introduce the capillary  Almgren-Taylor-Wang functional.
 Given a Caccioppoli set $F_0\subset\Omega$, the capillary Almgren-Taylor-Wang functional is defined by
\begin{equation}\label{eq:A_b_def}
    \script{A}{\beta}(F,F_0,\lambda) := \script{C}{\beta}(F) + \lambda\int_{F\triangle F_0}\operatorname{dist}(\cdot,\partial F_0)\,dx
\end{equation}
for each Caccioppoli set $F\subset\Omega$, where $F\Delta E_0=(E_0\backslash F)\cup(F\backslash E_0)$.
 Here, $\script{C}{\beta}$ denotes the capillary functional defined by
\begin{equation*}
    \script{C}{\beta}(F) := \int_{\Omega}|\nabla\chi_{F}| + \int_{\partial\Omega}\beta\gamma\chi_F\,\dH{d-1},
\end{equation*}
where $\mathcal{H}^{d-1}$ denotes the $d-1$ dimensional Hausdorff measure.
 The function $\beta\in L^\infty(\partial\Omega)$ will be formally taken as $\beta=\cos\theta$ on the intersection of $\partial\Omega$ and the boundary of $F$.
 One observes that the functional defined in \eqref{eq:A_b_def} is reduced to the functional introduced by Almgren, Taylor and Wang \cite{AlmregTaylorWang1993} when $\beta\equiv0$.
 The capillary scheme is as follows.
 Let $h=1/\lambda$ be a given time step.
 For a given Caccioppoli set $E_0$ find a minimizer $(E_0)_h$ which minimizes $E\mapsto\mathcal{A}_\beta(E,E_0,\lambda)$.
 We repeat this process and find a sequence of sets which is expected to approximate the solution of \eqref{MCFB}.
 However, the functional $\mathcal{A}_\beta$ is not convex so it is a priori not easy to find its minimizer.
 To overcome this inconvenience, we introduce the capillary Chambolle type scheme.
 Let $\Omega$ be a smooth bounded domain. Then, for each $u\in L^2(\Omega)$, we define

\begin{equation}\label{eq:E_b_def}
    \chambolleEnergy{\beta}{h}(u) := C_\beta(u) + \frac{1}{2h}\int_\Omega(u - \geodis{E_0})^2\,dx.
\end{equation}
Here, $h>0$ is a time step that discretizes an interval $[0,T]$ for some time horizon $T>0$; $\geodis{E_0}$ denotes the geodesic signed distance function to $E_0\subset\Omega$ with respect to $\Omega$.
The capillary functional $C_\beta$ is defined by

\begin{equation}\label{eq:Capillary_function}
    C_\beta(u) := \int_\Omega |\nabla u| + \int_{\pOmega}\beta\gamma u\,\dH{d-1}.
    \tag{CF}
\end{equation}

The functional $E_h^\beta$ is a strictly convex functional on $L^2(\Omega)$, so there exists a unique minimizer once we know $E_h^\beta$ is lower semi-continuous in $L^2(\Omega)$.
 In fact, Modica \cite{Modica1987} proved that $C_\beta(u)$ is lower semi-continuous in $L^1(\Omega)$ when $\Omega$ is a $C^2$ bounded domain and $\|\beta\|_\infty\leq1$.
 The assumption $\|\beta\|_\infty\leq1$ is natural since it is given as a cosine functional.
 As well-known, the condition $\|\beta\|_\infty\leq1$ is a necessary condition for the lower semi-continuity;
 see e.g.\ \cite{GNRS}.
 Although his lower semi-continuity result is enough for our purpose, we give a proof of the semi-continuity in $L^1(\Omega)$ which works for any uniformly $C^2$ domain not necessarily bounded.
 We first prove it for $\|\beta\|_\infty<1$ and using an argument by contradiction as in \cite[Proof of Lemma 2]{CaffarelliMellet2007} to prove the case $\|\beta\|_\infty=1$.
 See Proposition \ref{prop:lsc_d_dim_capillary}.

We shall explain the capillary Chambolle type scheme.
Given a set $E\subset\Omega$, consider the minimizing problem of the energy $E_h^\beta(u)$
in the Lebesgue space $L^2(\Omega)$.
 Since $E_h^\beta$ with $E_0=E$ is lower semi-continuous and convex in the topology $L^2(\Omega)$, 
we see that $E_h^\beta$ has a unique minimizer $w^h_E\in L^2(\Omega)$.
 We set
\[
	T^h(E) = \left\{ x \in \Omega \Bigm|
	w_E^h(x) \leq 0 \right\}.
\]
It turns out that this $T^h(E)$ is a minimizer of the capillary Almgren-Taylor-Wang functional.
 In the case $\beta\equiv0$, this was proved by Chambolle \cite[Proposition 2.2]{Chambolle2004}.
 More precisely, we have
\begin{thm} \label{Main1}
    Let $\Omega$ be a bounded $C^2$ domain in $\mathbb{R}^d$. Assume that 
    $\beta\in L^\infty(\pOmega)$ satisfies $\|\beta\|_\infty\leq 1$ and $\int_{\pOmega}\beta\dH{d-1} = 0$.
    Then, for any $E_0\subset\Omega$, $T_h^\beta(E_0)$ is a minimizer of $\script{A}{-\beta}(\cdot,E_0,1/h)$.
\end{thm}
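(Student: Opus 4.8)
The plan is to adapt Chambolle's strategy \cite{Chambolle2004}, which relates the strictly convex problem $\min E_h^\beta$ to a one-parameter family of geometric obstacle problems indexed by the super-level; the genuinely new point is the boundary capillary term, for which the hypothesis $\int_{\pOmega}\beta\,\dH{d-1}=0$ is essential. First I would dispose of the fidelity term of $\script{A}{-\beta}$. Writing $g:=\geodis{E_0}$, so that $\operatorname{dist}(\cdot,\partial E_0)=|g|$ on $\Omega$, and splitting $F\triangle E_0$ into $F\setminus E_0$ (where $g\ge0$) and $E_0\setminus F$ (where $g\le0$), one gets $\int_{F\triangle E_0}|g|\,dx=\int_F g\,dx-\int_{E_0}g\,dx$, the second term being independent of $F$. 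Hence $F$ minimizes $\script{A}{-\beta}(\cdot,E_0,1/h)$ if and only if $F$ minimizes $\Psi(F):=\script{C}{-\beta}(F)+\tfrac1h\int_F g\,dx$.

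The core step is a generalized coarea decomposition of $E_h^\beta$: for $u\in L^2(\Omega)$ with $C_\beta(u)<\infty$ I would establish
\[
  E_h^\beta(u)=\frac1{2h}\int_\Omega g^2\,dx+\int_{-\infty}^{\infty}\Phi_t(\{u>t\})\,dt,\qquad
  \Phi_t(F):=C_\beta(\chi_F)+\frac1h\int_F(t-g)\,dx,
\]
by combining three facts: the coarea formula $\int_\Omega|\nabla u|=\int_{\mathbb R}\int_\Omega|\nabla\chi_{\{u>t\}}|\,dt$; the pointwise identity $\tfrac12(u-g)^2-\tfrac12 g^2=\int_{\mathbb R}(t-g)\bigl(\chi_{\{u>t\}}-\chi_{(-\infty,0)}(t)\bigr)\,dt$, integrated over the bounded domain $\Omega$ by Fubini (valid since $g$ is bounded); and the layer-cake representation of the trace, $\gamma u=\int_{\mathbb R}\bigl(\gamma\chi_{\{u>t\}}-\chi_{(-\infty,0)}(t)\bigr)\,dt$ together with $\gamma\chi_{\{u>t\}}=\chi_{\{\gamma u>t\}}$ for a.e.\ $t$, which gives $\int_{\pOmega}\beta\gamma u\,\dH{d-1}=\int_{\mathbb R}\int_{\pOmega}\beta\,\gamma\chi_{\{u>t\}}\,\dH{d-1}\,dt$, because the remaining term $-\chi_{(-\infty,0)}(t)\int_{\pOmega}\beta\,\dH{d-1}$ vanishes by hypothesis. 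It is precisely here that $\int_{\pOmega}\beta\,\dH{d-1}=0$ enters: without it one is left with an affine-in-$t$ remainder that destroys the splitting.

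Since $E_h^\beta$ is strictly convex (by the quadratic term) and lower semicontinuous on $L^2(\Omega)$ --- Proposition~\ref{prop:lsc_d_dim_capillary}, equivalently Modica \cite{Modica1987}, together with the coercivity ensured by $\|\beta\|_\infty\le1$ --- it has a unique minimizer $w$. Each $\Phi_t$ is submodular: the perimeter is submodular, the bulk term $F\mapsto\frac1h\int_F(t-g)$ is modular, and the boundary term is modular because $\chi_{A\cap B}+\chi_{A\cup B}=\chi_A+\chi_B$ and the trace is linear; moreover the coefficient $t-g$ is nondecreasing in $t$. A standard two-set inequality then gives that whenever $A$ minimizes $\Phi_{t_1}$ and $B$ minimizes $\Phi_{t_2}$ with $t_1<t_2$, then $A\cup B$ minimizes $\Phi_{t_1}$ and $A\cap B$ minimizes $\Phi_{t_2}$; hence the maximal minimizers $F_t$ are nonincreasing in $t$. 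The function $\bar w(x):=\sup\{t:x\in F_t\}$ then lies in $L^\infty(\Omega)$ (since, by the coercivity estimates behind the well-posedness of the scheme, $F_t=\emptyset$ for $t$ large and $F_t=\Omega$ for $t$ very negative) and satisfies $\{\bar w>t\}=F_t$ for a.e.\ $t$; by the decomposition above $E_h^\beta(\bar w)\le E_h^\beta(w)$, so uniqueness forces $\bar w=w$, and therefore $\{w>t\}$ minimizes $\Phi_t$ for a.e.\ $t$. Taking $t_n\downarrow0$ through good levels, $\chi_{\{w>t_n\}}\uparrow\chi_{\{w>0\}}$ in $L^1(\Omega)$, and lower semicontinuity of $C_\beta$ together with continuity of the linear term yields that $\{w>0\}$ minimizes $\Phi_0$. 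Finally, for $G:=\Omega\setminus\{w>0\}=\{w\le0\}=T_h^\beta(E_0)$, using $\int_\Omega|\nabla\chi_{\Omega\setminus F}|=\int_\Omega|\nabla\chi_F|$, $\gamma\chi_{\Omega\setminus F}=1-\gamma\chi_F$ and once more $\int_{\pOmega}\beta\,\dH{d-1}=0$, one checks $\Phi_0(\Omega\setminus G)=\Psi(G)-\tfrac1h\int_\Omega g\,dx$, so $T_h^\beta(E_0)$ minimizes $\Psi$, i.e.\ minimizes $\script{A}{-\beta}(\cdot,E_0,1/h)$ by the first step.

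The main obstacle is the coarea decomposition of the boundary term, and within it the identification $\gamma\chi_{\{u>t\}}=\chi_{\{\gamma u>t\}}$ for a.e.\ $t$, which must be extracted from BV trace theory --- for instance by taking traces in $u\wedge b-u\wedge a=\int_a^b\chi_{\{u>t\}}\,dt$ and differentiating in $b$ --- after which one must recognize that the vanishing-average condition on $\beta$ is exactly what makes the splitting exact. Secondary technical points are justifying the Fubini/integrability steps, and the coercivity needed both for the well-posedness invoked above and for confining the level sets $F_t$ (hence placing $\bar w$ in $L^\infty(\Omega)$).
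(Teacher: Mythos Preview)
Your approach is sound and reaches the conclusion, but it follows a genuinely different route from the paper's proof. The paper argues via subdifferential calculus: writing the Euler--Lagrange inclusion $-p:=(\geodis{E_0}-w)/h\in\partial C_\beta(w)\subset\partial C_\beta(0)$, it uses positive $1$-homogeneity (Proposition~\ref{prop:subdiff_chara}) together with the coarea formula to obtain the identity $\int_{-M}^{M} C_{-\beta}(\chi_{\{w<s\}})\,ds=\int_{-M}^{M}\int_\Omega p\,\chi_{\{w<s\}}\,dx\,ds$; since $p\in\partial C_{-\beta}(0)$ already gives $C_{-\beta}(\chi_{\{w<s\}})\ge\int_\Omega p\,\chi_{\{w<s\}}\,dx$ for every $s$, equality must hold for a.e.\ $s$, whence $p\in\partial C_{-\beta}(\chi_{\{w<s\}})$ and the minimizing property follows by a direct comparison. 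The sign flip $\beta\to-\beta$ appears because the paper works with \emph{sublevel} sets $\{w<s\}$, so no final complementation is needed. Your route instead decomposes the full energy $E_h^\beta$ into slice problems $\Phi_t$, invokes submodularity and nesting of maximal minimizers to build a competitor $\bar w$, and identifies $\bar w=w$ by strict convexity; the complementation at the end produces $-\beta$. What your approach buys is independence from the subdifferential machinery of Section~\ref{sec:SubdifferentialOfCapillaryFunctional}; what the paper's approach buys is that it avoids the existence of maximal minimizers and the nesting lemma, and the bound $|w|\le M$ (obtained by the truncation ``maximal principle'') is used up front so that all integrals live on $[-M,M]$.

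Two technical points in your write-up need repair. First, the displayed decomposition $E_h^\beta(u)=\tfrac{1}{2h}\int_\Omega g^2+\int_{\mathbb R}\Phi_t(\{u>t\})\,dt$ is not literally correct: for $t\ll0$ one has $\{u>t\}=\Omega$ and $\Phi_t(\Omega)=\tfrac{1}{h}(t|\Omega|-\int_\Omega g)$, which is not integrable over $(-\infty,0)$. The pointwise identity you wrote carries the compensator $\chi_{(-\infty,0)}(t)$, and it is only after grouping with it that the $t$-integral converges; the clean fix is to restrict to $t\in[-M,M]$ once $|w|\le M$ is known, where the compensating terms are the same constant for every competitor bounded by $M$. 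Second, the assertion that $F_t=\emptyset$ for $t$ large (hence $\bar w\in L^\infty$) does not follow from any ``coercivity estimate'' available here, because $C_\beta(\chi_F)$ can be negative. The simplest route is again to first prove $|w|\le M:=\|\geodis{E_0}\|_\infty$ by comparing $w$ with $(w\vee(-M))\wedge M$, then run your nesting argument only for $t\in[-M,M]$, defining $\bar w(x)=\sup\{t\in[-M,M]:x\in F_t\}$ with the convention $\sup\emptyset=-M$, so that $|\bar w|\le M$ by construction. With these two adjustments your argument is complete.
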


Moreover, characterization of the subdifferential of the capillary functional $C_\beta$ seems important because Chambolle 
used this characterization in implementation of his scheme. Though we do not adopt this direction in numerical experiment, we state its rigorous form.
\begin{thm} \label{Main2}
    Let $\Omega$ be a bounded $C^2$ domain in $\mathbb{R}^d$.
 For $(u,p)\in L^2(\Omega)\times L^2(\Omega)$, $p \in\partial C_\beta(u)$ if and only if there exists $z\in L^\infty(\Omega;\mathbb{R}^d)$ with $\operatorname{div}z\in L^2(\Omega)$ such that $p = -\operatorname{div}{z}$ in $\mathcal{D}'(\Omega)$, $\beta = -[z\cdot\nu]$ $\mathcal{H}^{d-1}$-a.e.\ on $\partial\Omega$ and $\int_\Omega(z,Du)=\int_\Omega|\nabla u|$ with $\|z\|_\infty \leq 1$, where $(z,Du)$ denotes the Anzellotti pair.
\end{thm}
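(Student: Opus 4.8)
The plan is to prove the two implications separately, combining convex duality in $L^2(\Omega)$ with Anzellotti's calculus of pairings $(z,Dv)$ between fields $z\in L^\infty(\Omega;\mathbb{R}^d)$ with $\operatorname{div}z\in L^2(\Omega)$ and functions $v\in BV(\Omega)\cap L^2(\Omega)$: the weak normal trace $[z\cdot\nu]\in L^\infty(\pOmega)$ with $\|[z\cdot\nu]\|_{L^\infty(\pOmega)}\le\|z\|_\infty$, the bound $|(z,Dv)|\le\|z\|_\infty|Dv|$ of measures, and the Gauss--Green formula $\int_\Omega v\,\operatorname{div}z\,dx+\int_\Omega(z,Dv)=\int_{\pOmega}[z\cdot\nu]\,\gamma v\,\dH{d-1}$. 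One may first reduce to $\|\beta\|_\infty\le1$: if $|\beta|>1$ on a set of positive $\mathcal{H}^{d-1}$-measure, then testing $v\mapsto C_\beta(v)-\langle p,v\rangle_{L^2}$ with functions of growing amplitude concentrated in a vanishing boundary layer over that set shows this functional is unbounded below for every $p\in L^2(\Omega)$, so $C_\beta$ has no affine minorant and $\partial C_\beta(u)=\emptyset$ for all $u$; since $\beta=-[z\cdot\nu]$ with $\|z\|_\infty\le1$ forces $\|\beta\|_\infty\le1$, the other side is empty too, and the equivalence is vacuous. When $\|\beta\|_\infty\le1$, Proposition~\ref{prop:lsc_d_dim_capillary} together with the embedding $L^2(\Omega)\hookrightarrow L^1(\Omega)$ ($\Omega$ bounded) makes $C_\beta$ a proper convex lower semicontinuous functional on $L^2(\Omega)$, so $C_\beta=C_\beta^{**}$ and $p\in\partial C_\beta(u)$ if and only if $C_\beta(u)+C_\beta^*(p)=\langle p,u\rangle_{L^2}$.

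The implication ``if'' is immediate. Let $z$ be as in the statement (so implicitly $u\in BV(\Omega)$), let $v\in L^2(\Omega)$, and assume $v\in BV(\Omega)$, for otherwise $C_\beta(v)=+\infty$. Applying Gauss--Green to $z$ and $v-u$ and using $p=-\operatorname{div}z$ and $[z\cdot\nu]=-\beta$,
\[
\langle p,v-u\rangle_{L^2}=\int_\Omega(z,Dv)-\int_\Omega(z,Du)+\int_{\pOmega}\beta\,(\gamma v-\gamma u)\,\dH{d-1}.
\]
Since $\int_\Omega(z,Dv)\le\int_\Omega|\nabla v|$ (because $\|z\|_\infty\le1$) and $\int_\Omega(z,Du)=\int_\Omega|\nabla u|$ by hypothesis, the right-hand side is at most $C_\beta(v)-C_\beta(u)$, hence $p\in\partial C_\beta(u)$.

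For ``only if'' I would identify $C_\beta^*$. Set $Z_\beta:=\{z\in L^\infty(\Omega;\mathbb{R}^d):\operatorname{div}z\in L^2(\Omega),\ \|z\|_\infty\le1,\ [z\cdot\nu]=-\beta\ \mathcal{H}^{d-1}\text{-a.e.\ on }\pOmega\}$ and $\mathcal{K}:=\{-\operatorname{div}z:z\in Z_\beta\}\subset L^2(\Omega)$. Taking $u=0$ in the identity above (or directly from Gauss--Green) gives $\langle-\operatorname{div}z,v\rangle_{L^2}\le C_\beta(v)$ for all $z\in Z_\beta$ and $v\in L^2(\Omega)$, so $C_\beta^*\le0$ on $\mathcal{K}$; together with $C_\beta^*(p)\ge\langle p,0\rangle_{L^2}-C_\beta(0)=0$ this forces $C_\beta^*\equiv0$ on $\mathcal{K}$. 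The decisive point is the matching bound $C_\beta(v)=\sup_{z\in Z_\beta}\langle-\operatorname{div}z,v\rangle_{L^2}$ for all $v\in L^2(\Omega)$, i.e.\ that $C_\beta$ is the support function of $\mathcal{K}$; combined with the fact that $\mathcal{K}$ is convex and closed in $L^2(\Omega)$ --- if $-\operatorname{div}z_n\to q$ in $L^2$ with $z_n\in Z_\beta$, a weak-$*$ limit $z$ of a subsequence satisfies $\|z\|_\infty\le1$, $-\operatorname{div}z=q\in L^2(\Omega)$, and $[z\cdot\nu]=-\beta$ by weak-$*$ continuity of the normal trace --- this yields (as the conjugate of the support function of a closed convex set) that $C_\beta^*$ is the indicator of $\mathcal{K}$, namely $0$ on $\mathcal{K}$ and $+\infty$ off it. Then $p\in\partial C_\beta(u)$ forces $p=-\operatorname{div}z$ for some $z\in Z_\beta$ and $C_\beta(u)=\langle p,u\rangle_{L^2}$; expanding the last equality by Gauss--Green gives $\int_\Omega(z,Du)+\int_{\pOmega}\beta\,\gamma u\,\dH{d-1}=\int_\Omega|\nabla u|+\int_{\pOmega}\beta\,\gamma u\,\dH{d-1}$, which reduces to $\int_\Omega(z,Du)=\int_\Omega|\nabla u|$, completing the required list of conditions.

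The main obstacle is the dual representation, i.e.\ producing, for a given $v$, a field in $Z_\beta$ that nearly achieves $C_\beta(v)$. For the interior total variation alone this is already the heart of Anzellotti's theory (it reduces to density of $C^1_c(\Omega)$ vector fields); the new difficulty is to match the prescribed normal trace $-\beta$, which is merely $L^\infty$, while keeping $\|z\|_\infty\le1$ and $\operatorname{div}z\in L^2(\Omega)$. I would treat it by first reducing, via the relaxation behind Proposition~\ref{prop:lsc_d_dim_capillary} (Modica \cite{Modica1987}), to $v\in C^\infty(\overline{\Omega})$, and there gluing a field close to $\nabla v/|\nabla v|$ away from $\pOmega$ to a boundary-layer field supported in $\{\operatorname{dist}(\cdot,\pOmega)<\delta\}$ of norm $\le1$, with divergence in $L^2$ and the prescribed normal trace, absorbing the roughness of $\beta$ by approximating it in $L^1(\pOmega)$ by $C^1$ functions of sup norm $\le1$ and passing to a weak-$*$ limit of the associated fields (again using weak-$*$ continuity of the normal trace along sequences bounded in $L^\infty$ with divergences bounded in $L^2$); the endpoint $\|\beta\|_\infty=1$ is handled by the further approximation replacing $\beta$ by $(1-\varepsilon)\beta$, as in the proof of Proposition~\ref{prop:lsc_d_dim_capillary}. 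The $C^2$ regularity of $\pOmega$ enters precisely through these tubular-neighbourhood coordinates and the normal-trace machinery; the remaining steps --- the easy implication, the convex-analytic bookkeeping, and the closing Gauss--Green identity --- are routine once the pairing calculus is available.
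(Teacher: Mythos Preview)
Your argument is correct and takes a genuinely different route from the paper. The paper works through Alter's duality for nonnegative positively one-homogeneous functionals: it first adds a linear term $\int_\Omega fu$ to force $C_{\beta,f}\ge0$, introduces an auxiliary functional $\Psi_{\beta,f}(q)=\inf\{\|z\|_\infty:\ q=-\operatorname{div}z+f\|z\|_\infty,\ \|z\|_\infty\beta=-[z\cdot\nu]\}$, proves $\widetilde{C_{\beta,f}}=\Psi_{\beta,f}$ (Lemma~\ref{LDual}), and then invokes Proposition~\ref{prop:char_subdiff}. Because the infimum defining $\Psi_{\beta,f}$ may not be attained, the paper must pass to weak-$*$ limits of minimizing sequences and split off the degenerate case $C_{\beta,f}(u)=0$. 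You bypass all of this by computing $C_\beta^*$ directly as the indicator of the closed convex set $\mathcal{K}=\{-\operatorname{div}z:z\in Z_\beta\}$ and reading off the equivalence from the Fenchel identity; this is cleaner, avoids the positivity trick and the case distinction, and your treatment of the vacuous case $\|\beta\|_\infty>1$ is a nice addition that the paper omits. The two proofs share the same bottleneck, however: the hard inequality $C_\beta(v)\le\sup_{z\in Z_\beta}\langle-\operatorname{div}z,v\rangle$ in your version is precisely the inequality $\widetilde{\Psi_{\beta,f}}\ge C_{\beta,f}$ in the paper's Lemma~\ref{LDual}, and both reduce to producing a field in a thin boundary layer with $\|z\|_\infty\le1$, $\operatorname{div}z\in L^2$, and prescribed normal trace $-\beta$. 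The paper asserts this extension in one sentence; your sketch (smooth approximation of $\beta$, then weak-$*$ limit) is at the same level of detail and would work, but be aware that this construction is where the actual analytic content lies in either approach.
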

Here $\mathcal{D}'(\Omega)$ is the space of Schwartz's distributions and $[z\cdot\nu]$ is the normal trace.
 We prove Theorem \ref{Main1} by a duality argument for positively homogeneous function due to Alter; see e.g.\ \cite{CaffarelliMellet2007}.
 However, $C_\beta(u)$ may not be positive for some $u$.
 We add a linear functional and characterize its subdifferential.
 Unfortunately, the characterization by a duality argument is more involved because of addition of a linear function.

We also give a numerical simulation of our scheme.
 In other words, for given initial data $E_0$, we define a discrete solution
\[
    E^h(t) := T_h^{\lfloor\frac{t}{h}\rfloor}(E_0).
\]
In \cite{Chambolle2004_TV}, Chambolle gave a way to calculate the minimizer of $E_h^\beta$ with $\beta\equiv0$ based on duality.
 Although this idea applies several problems including higher-order total variation flow \cite{GMR}, we do not use his approach.
 Instead, we adapt a split Bregman method as applied by \cite{ObermanOsherTakeiTsai2011} to calculate a planar crystalline curvature flow.
 This method was introduced by Goldstein and Osher \cite{GoldsteinOsher2009} to calculate energy minimizer including total variation.
 It applies the fourth order total variation flow \cite{GU}.
 Our domain $\Omega$ is a strip, and we calculate various examples including translative soliton \cite{AltschulerWu1993}.
 Since in Theorem \ref{Main1}, we are forced to assume that the average of $\beta$ is equal to zero, at each time step, we redefine $\beta$ outside contact points.

Let us remark a few related preceding works to our discrete solution $E^h(t)$.
 In \cite{BellettiniKholmatov2018}, Bellettini and Kholmatov considered $A_\beta(F,E_0,\lambda)$ when $\Omega=\mathbb{R}_+^d$, the half space.
 In their case, $C_\beta(u)\geq0$ and the lower semi-continuity is easy to prove, though they invoked a flat version of the inequality (Corollary \ref{cor:d_dim_general_domain_estimate}).
 Bellettini and Kholmatov \cite{BellettiniKholmatov2018} showed the convergence of their scheme to a time evolution of sunsets (which is called a generalized minimizing movement, GMM for short).
 They proved, under conditional assumption similar to \cite{LS}, that GMM is a ``distributional'' solution of \eqref{MCFB}.

In the case $\beta\equiv0$, Chambolle \cite{Chambolle2004} proved that $E^h(t)$ actually converged to the level-set flow \cite{CGG}, \cite{ES}, see also \cite{G} of the mean curvature flow equations provided no fattening occurs;
 see e.g.\ \cite{EtoGigaIshii2012} for a generalization to general anisotropic flow for unbounded sets.
 Recently, Chambolle, Gennaro and Morini [arXiv:2212.05027] established a convergence result
 of a proposed energy minimizing scheme for the anisotropic mean curvature flow with 
 a forcing term and a mobility which depends on both the position and the direction of the normal vector.
 The minimizing movement constructed in their scheme turned out to converge to a distributional solution 
 \`{a} la Luckhaus-Sturzenhecker. The family of time step functions whose upper-level sets are equal to 
 the minimizing movement was shown to converge to the viscosity solution of the corresponding level-set equation.

In the case $\beta\not\equiv0$, the unique existence of the level-set flow has been already established by \cite{IS} and \cite{Ba}.
 We expect that our discrete solution converges to the level-set flow, although we do not try to prove it in this paper.

This paper is organized as follows. In Section \ref{sec:Preliminaries}, we prepare several notions and notations which will be used frequently
throughout the paper. Topics include basic convex analysis, functions of bounded variation, and geodesic distance. 
In Section \ref{sec:lsc_capillary_functional}, we present our proof of the lower semi-continuity of $C_\beta$.
In Section \ref{sec:SubdifferentialOfCapillaryFunctional},
we recall a method to characterize the subdifferential of a functional proposed by Alter. 
This method will give a concrete form of the subdifferential of the capillary functional (see Theorem \ref{Main1}).
In Section \ref{sec:CapillaryTypeChambolleScheme}, we will prove that the capillary Chambolle type scheme implements
the capillary Almgren-Taylor-Wang type scheme in some sense (see Theorem \ref{Main2}).
After that, we shall carry out a numerical experiment to confirm that our scheme works well and its outcome is as expected 
in Section \ref{sec:NumericalExperiments}. The employed scheme is the Split Bregman method. Note that we cannot apply the method directly without any modification due to contribution of the boundary energy.
No convergence result is given in this paper.

\section{Preliminaries}\label{sec:Preliminaries}
In this section, we recall several basic notions and notations without proofs which are important to 
investigate properties of the capillary functional and the capillary Chambolle type scheme.

\subsection{Convex analysis}
Let $E$ be a normed (real vector) space and $E^*$ be its conjugate (dual) space, that is the set of all bounded linear functionals on $E$. Then, for each function $f : E\to\extR$ and $u\in E$, the subdifferential $\partial f(u)$ of $f$ at $u$ is defined as the set of all elements $p$ in $E^*$ such that
\begin{equation*}
    f(v) \geq \left<p, v - u\right> + f(u)
\end{equation*}
holds for every $v\in E$, where $\left<\ ,\, \right>$ denotes the duality pair.
 Note that $f$ is allowed to take the value as $\infty$. 
If $f\not\equiv\infty$, then $f$ is called proper. The domain $D(f)\subset E$ of $f$ is defined the set of all elements $u$ in $E$ for which $f(u)<\infty$. 
Given a function $f$ on $E$, we define another function $f^*$ on $E^*$ by
\begin{equation}\label{eq:FenchelConjugate}
    f^*(p) := \sup_{u\in E}\{\left<p,u\right> - f(u)\}\tag{FC}
\end{equation}
for each $p\in E^*$. The function $f^*$ is called Fenchel conjugate of $f$. 
Let us recall one important characterization of the subdifferential in terms of Fenchel conjugate as follows; see e.g.\ \cite{Rock}.
\begin{prop}[Fenchel identity]\label{prop:FenchelIdentity}
    Assume that $f: E\to\extR$ is proper and $u\in D(f)$. Then, $p \in\subdiff{f}{u}$ if and only if the following identity is valid:
    \begin{equation*}
        f(u) + f^*(p) = \left<p, u\right>.
    \end{equation*}
\end{prop}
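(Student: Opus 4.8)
The statement is the classical Fenchel identity (sometimes called the Fenchel--Young equality), and the plan is to extract it directly from the two defining inequalities at hand: the definition of the subdifferential $\partial f(u)$ and the definition \eqref{eq:FenchelConjugate} of the Fenchel conjugate as a supremum. No special structure of $E$ or $f$ beyond properness and $u\in D(f)$ is needed.

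First I would record the \emph{Fenchel--Young inequality}. Since $f^*(p)=\sup_{v\in E}\{\langle p,v\rangle-f(v)\}$, testing the supremum at $v=u$ gives $f^*(p)\ge\langle p,u\rangle-f(u)$. Because $u\in D(f)$ the number $f(u)$ is finite, so this may be rearranged without ambiguity to
\[
 f(u)+f^*(p)\ge\langle p,u\rangle ,
\]
and this holds for every $p\in E^*$. (Properness of $f$ guarantees $f^*(p)>-\infty$, so the left-hand side is a well-defined element of $\extR$.) Consequently the asserted identity is equivalent to the reverse inequality $f(u)+f^*(p)\le\langle p,u\rangle$, and it suffices to show that this reverse inequality characterizes $p\in\partial f(u)$.

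Next I would prove that equivalence in both directions. If $p\in\partial f(u)$, then $f(v)\ge f(u)+\langle p,v-u\rangle$ for all $v\in E$; rearranging gives $\langle p,v\rangle-f(v)\le\langle p,u\rangle-f(u)$ for all $v$, and taking the supremum over $v$ yields $f^*(p)\le\langle p,u\rangle-f(u)$, i.e.\ $f(u)+f^*(p)\le\langle p,u\rangle$ (which in particular forces $f^*(p)$ to be finite). Conversely, if $f(u)+f^*(p)\le\langle p,u\rangle$, then for every $v\in E$ we have $\langle p,v\rangle-f(v)\le f^*(p)\le\langle p,u\rangle-f(u)$, and rearranging produces exactly the subgradient inequality $f(v)\ge f(u)+\langle p,v-u\rangle$; hence $p\in\partial f(u)$. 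Combining these two directions with the Fenchel--Young inequality of the previous step gives the stated identity.

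There is essentially no analytic obstacle here: the entire argument is a rearrangement of the two definitions, and the only step needing care is the bookkeeping with $\pm\infty$. One uses $u\in D(f)$ to keep $f(u)$ finite and properness of $f$ to keep $f^*(p)>-\infty$, so that every rearrangement of the displayed inequalities is legitimate and no $\infty-\infty$ ever occurs. Everything else is immediate.
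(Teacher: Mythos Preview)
Your argument is correct and is precisely the standard derivation of the Fenchel identity from the definitions. The paper does not supply its own proof of this proposition (it simply refers to \cite{Rock}), so there is nothing further to compare.
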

Fenchel identity yields another characterization of the subdifferential when $f$ is positively homogeneous of degree $1$, i.e., $f$ satisfies
\[
	f(\lambda u) = \lambda f(u)
	\ \text{for all}\ \lambda >0, \quad u \in E.
\]
\begin{prop}\label{prop:subdiff_chara}
    Assume that $f:E\to\extR$ is proper and $u\in D(f)$.
    Suppose that $f$ is positively homogeneous of degree $1$. Then, it holds that
    \begin{equation*}
        \subdiff{f}{u} = \{p\in\subdiff{f}{0}\mid f(u) = \left<p, u\right>\}.
    \end{equation*}
\end{prop}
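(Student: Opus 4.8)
The plan is to prove the two inclusions separately, using the Fenchel identity (Proposition \ref{prop:FenchelIdentity}) together with the fact that positive homogeneity of degree $1$ forces the Fenchel conjugate $f^*$ to be the indicator function of the (convex) set $\subdiff{f}{0}$, i.e.\ $f^*(p) = 0$ if $p \in \subdiff{f}{0}$ and $f^*(p) = +\infty$ otherwise. To see this last claim, note that for $p \in E^*$ and $\lambda > 0$ we have $\left<p, \lambda u\right> - f(\lambda u) = \lambda(\left<p, u\right> - f(u))$, so the supremum defining $f^*(p)$ in \eqref{eq:FenchelConjugate} is either $0$ (attained at $u = 0$, when $\left<p,u\right> \le f(u)$ for all $u$) or $+\infty$ (by letting $\lambda \to \infty$ along any $u$ with $\left<p,u\right> > f(u)$). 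Moreover $\left<p,u\right> \le f(u)$ for all $u \in E$ is exactly the statement $p \in \subdiff{f}{0}$, since the subgradient inequality at $0$ reads $f(v) \ge \left<p, v\right> + f(0)$ and $f(0) = 0$ by homogeneity (take $\lambda \to 0^+$, or simply $f(0) = f(2 \cdot 0) = 2 f(0)$).

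With this in hand, I would argue as follows. Suppose $p \in \subdiff{f}{u}$. Then $p \in \subdiff{f}{0}$: indeed, the subgradient inequality at $u$ gives, for any $v$, $f(v) \ge \left<p, v - u\right> + f(u)$; applying this with $v$ replaced by $v + u$ and using $f(v+u) \le$ -- wait, that needs subadditivity which we do not have, so instead I use the Fenchel identity directly. Since $p \in \subdiff{f}{u}$, Proposition \ref{prop:FenchelIdentity} gives $f(u) + f^*(p) = \left<p, u\right>$; in particular $f^*(p) < \infty$, hence $f^*(p) = 0$ by the dichotomy above, which both shows $p \in \subdiff{f}{0}$ and yields $f(u) = \left<p, u\right>$. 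This proves the inclusion ``$\subseteq$''. Conversely, suppose $p \in \subdiff{f}{0}$ and $f(u) = \left<p, u\right>$. Then $f^*(p) = 0$, so $f(u) + f^*(p) = \left<p, u\right>$, and Proposition \ref{prop:FenchelIdentity} (applicable since $u \in D(f)$) gives $p \in \subdiff{f}{u}$. This proves ``$\supseteq$''.

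I do not expect any serious obstacle here; the only point requiring a little care is the justification that $f^*$ takes only the values $0$ and $+\infty$ and that its zero set is precisely $\subdiff{f}{0}$, which rests on the elementary scaling computation above and on $f(0) = 0$. One should also note that $f(0) = 0$ combined with properness ensures $0 \in D(f)$, so the Fenchel identity is legitimately available at $u = 0$ as well, though in fact it is only needed at the given point $u \in D(f)$. Everything else is a direct manipulation of definitions.
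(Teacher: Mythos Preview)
Your proof is correct and follows exactly the route the paper indicates: the paper does not spell out a proof of Proposition~\ref{prop:subdiff_chara} but introduces it with the remark that ``Fenchel identity yields another characterization of the subdifferential when $f$ is positively homogeneous of degree $1$,'' and your argument carries this out precisely---computing that $f^*$ is the indicator of $\subdiff{f}{0}$ and then applying Proposition~\ref{prop:FenchelIdentity} in both directions.
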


\begin{prop}[\cite{Brezis2011}, Proposition 1.10] \label{prop:HahnBanach} 
    Suppose that $f:E\to (-\infty,\infty]$ is 
    lower semi-continuous and convex with $\varphi\not\equiv\infty$. 
    Then, $f$ is bounded from below by an affine continuous function.
 In other words, there exist $p\in E^*$ and $b\in\mathbb{R}$ such that
\[
	f(u) \geq \langle p,u \rangle + b
	\ \text{for all}\ u \in E.
\]
\end{prop}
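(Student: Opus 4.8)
The plan is to apply the geometric (second) form of the Hahn--Banach theorem in the product space $E\times\mathbb{R}$, separating the epigraph of $f$ from a point that lies strictly below the graph. First I would introduce $\mathrm{epi}(f):=\{(u,\lambda)\in E\times\mathbb{R}\mid f(u)\leq\lambda\}$. Convexity of $f$ makes $\mathrm{epi}(f)$ convex, lower semi-continuity makes it closed in $E\times\mathbb{R}$ with the product norm, and $f\not\equiv\infty$ makes it nonempty. Choose $u_0\in D(f)$ and set $\lambda_0:=f(u_0)-1$, so that $(u_0,\lambda_0)\notin\mathrm{epi}(f)$. Since the singleton $\{(u_0,\lambda_0)\}$ is compact (and convex) and $\mathrm{epi}(f)$ is closed and convex, the strict-separation form of Hahn--Banach yields a continuous linear functional $\Phi$ on $E\times\mathbb{R}$ and a real number $\alpha$ with $\Phi(u,\lambda)\geq\alpha>\Phi(u_0,\lambda_0)$ for every $(u,\lambda)\in\mathrm{epi}(f)$.

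Next I would write $\Phi(u,\lambda)=\langle p,u\rangle+k\lambda$ with $p\in E^*$ and $k\in\mathbb{R}$. Because $(u,\lambda)\in\mathrm{epi}(f)$ for every $\lambda\geq f(u)$ once $u\in D(f)$, letting $\lambda\to+\infty$ in the separation inequality forces $k\geq 0$. The key step is to rule out $k=0$: if $k=0$ then $\langle p,u\rangle\geq\alpha$ for all $u\in D(f)$, contradicting $\langle p,u_0\rangle<\alpha$ since $u_0\in D(f)$. Hence $k>0$. Evaluating the separation inequality at $(u,f(u))$ for $u\in D(f)$ gives $f(u)\geq(\alpha-\langle p,u\rangle)/k$, and this also holds trivially when $f(u)=\infty$. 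Setting $q:=-p/k\in E^*$ and $b:=\alpha/k\in\mathbb{R}$ gives $f(u)\geq\langle q,u\rangle+b$ for all $u\in E$, which is the assertion.

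The main obstacle is not conceptual but a matter of invoking the right tool: one needs the strict (second geometric) version of Hahn--Banach for a closed convex set versus a point in a normed space, applied in $E\times\mathbb{R}$, and one must check $k\neq 0$ — which is precisely where properness ($f\not\equiv\infty$, so $D(f)\neq\emptyset$) re-enters. The remaining verifications (closedness of the epigraph from lower semi-continuity, sign of $k$, the final rescaling) are routine.
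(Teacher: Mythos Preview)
Your argument is correct and is precisely the standard epigraph--separation proof via the second geometric form of the Hahn--Banach theorem. The paper does not supply its own proof of this proposition; it cites Brezis (where exactly this argument appears) and merely remarks that the statement is a corollary of the Hahn--Banach theorem, so your approach coincides with the cited source.
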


\begin{rem}\label{rem:BiasCanbeZero}
The first statement of Proposition \ref{prop:HahnBanach} is a Corollary of the Hahn-Banach theorem.
 If $f$ is positively homogeneous of degree $1$, $f(\lambda v)\geq\langle p,\lambda v \rangle + b$ implies $f(v)\geq\langle p,v \rangle + b/\lambda$ for all $\lambda>0$.
 Sending $\lambda\to\infty$, we observe that
\[
	f(v) \geq \langle p,v \rangle 
	\ \text{for}\ v \in E.
\]
Thus, in the case that $f$ is positively homogeneous of degree $1$, $b$ can be taken as zero.
\end{rem}

\subsection{Function of bounded variation}
Let $\Omega$ be a smooth, bounded, and connected domain. 
Then, the total variation of $f:\Omega\to\mathbb{R}$ is defined by:
\begin{equation*}
    \int_\Omega|\nabla u| := \sup\left\{-\int_\Omega u\operatorname{div}\varphi\,dx \biggm| \varphi\in C^1_0(\Omega;\mathbb{R}^d),\ \|\varphi\|_{\infty}\leq 1.\right\}.
\end{equation*}
If $\int_\Omega|\nabla u| < \infty$, then $u$ is called a function of bounded variation in $\Omega$.
In other words, the weak derivative of $u$ is a Radon measure in $\Omega$.
Now, we let
\[
\mathbf{X}_2(\Omega) := \left\{u\in L^2(\Omega;\mathbb{R}^d) \bigm| \operatorname{div}{u}\in L^2(\Omega)\right\}.
\]
For every $z\in\mathbf{X}_2(\Omega)$ and $u\in BV(\Omega)$, the Radon measure $(z,Du)$ is defined by:

\begin{equation*}
    \left<(z,Du),\varphi\right> := -\int_\Omega u\varphi\operatorname{div}{z}\,dx - \int_\Omega uz\cdot\nabla\varphi\,dx\ \ \mbox{for}\ \ \varphi\in C^\infty_0(\Omega).
\end{equation*}
$(z,Du)$ is often called the Anzellotti pair (see \cite[Definition 1.4]{Anzellotti1983}).
Moreover, there exists a linear operator $[\cdot,\nu_\Omega]:\mathbf{X}_2(\Omega)\to L^\infty(\pOmega)$
such that $\|[z,\nu_\Omega]\|_\infty \leq \|z\|_\infty$ for each $z\in\mathbf{X}_2(\Omega)$ and
$[z,\nu_\Omega] = z\cdot\nu_\Omega$ if $z\in C^1(\closure{\Omega};\mathbb{R}^d)$ (see \cite[Theorem 1.2]{Anzellotti1983}).
The following Green's formula related to $(z,Du)$ and $[z\cdot\nu_\Omega]$ is important for our study:

\begin{equation*}
    \int_\Omega u\operatorname{div}{\varphi}\,dx = \int_{\pOmega}\gamma u[z\cdot\nu_\Omega]\,\dH{d-1} - \int_\Omega (z,Du)\ \ \mbox{for}\ \ (z,u)\in\mathbf{X}_2(\Omega)\times BV(\Omega).
\end{equation*}

\subsection{Geodesic distance}
In this section, we always assume that a domain $\Omega\subset\mathbb{R}^d$ is smooth, bounded and
connected. 
\begin{dfn}[Path]
    Let $x,y\in\closure{\Omega}$ be distinct points. Then, a Lipschitz continuous function
    $l:[0,1]\to\closure{\Omega}$ is called a \emph{path} between $x$ and $y$ if and only if $l(0) = x$ and $l(1) = y$.
\end{dfn}
\begin{dfn}[Geodesic distance between two points]
    Let $x,y\in\closure{\Omega}$ be distinct points. Then, the \emph{geodesic distance} $\operatorname{dist}_\Omega(x,y)$
    between $x$ and $y$ is defined by:
    \begin{equation}\label{eq:PPGD}
        \operatorname{dist}_\Omega(x,y) := \inf\left\{\int_0^1|l'(t)|\,dt\ \mid\ \mbox{l is a path between}\ x\ \mbox{and}\ y\right\}.\tag{PPGD}
    \end{equation}
\end{dfn}
\begin{rem}
    The infimum of \eqref{eq:PPGD} can be attained, namely a minimizer exists.
    This fact is shown in terms of the Ascoli-Arzer\'a theorem and the lower semi-continuity of $l\mapsto\int_0^1|l'(t)|dt$.
    See Section Minimal geodesics in \cite{ChenMirebeauShuCohen2019}.
\end{rem}
\begin{dfn}[Geodesic distance function]
    Let $x\in\closure{\Omega}$ and $E\subset\Omega$. Then, the geodesic distance $\operatorname{dist}_{\Omega,E}(x)$ of $x$ to $E$ is defined by:
    \begin{equation}\label{eq:PSGD}
        \operatorname{dist}_{\Omega,E}(x) := \inf\{\operatorname{dist}_{\Omega}(x,y)\ \mid\ y\in E\}.\tag{PSGD}
    \end{equation}
\end{dfn}
\begin{dfn}[Geodesic signed distance function]
    Let $E\subset\Omega$. Then, the geodesic signed distance function to $E$ is defined by:
    \begin{equation*}
        d_{\Omega,E}(x) :=
        \begin{cases}
            -\operatorname{dist}_{\Omega,\Omega\backslash E}(x)\ \ \mbox{if}\ \ x\in E,\\
            \operatorname{dist}_{\Omega,E}(x)\ \ \mbox{if}\ \ x\in \Omega\backslash E.
        \end{cases}
    \end{equation*}
\end{dfn}

If $\Omega$ is convex, then $d_{\Omega,E}$ corresponds to the ordinary signed distance function $d_{E}$ defined by
    \begin{equation*}
        d_E(x) =
        \begin{cases}
            -\inf_{y \in \Omega\backslash E} |x - y|\ \ \mbox{if}\ \ x\in E,\\
            \inf_{y \in E} |x - y|\ \ \mbox{if}\ \ x\in \Omega\backslash E.
        \end{cases}
    \end{equation*}
It is easy to see that $F\subset E$ does not necessarily imply that $d_E\leq d_F$ unless $\Omega$ is convex.
 In other words, $d_E$ is not monotonous with respect to $E$.
 This is a reason we introduce $d_{\Omega,E}$.
 Indeed, by definition we have
\begin{lem} \label{Lmono}
The geodesic signed distance is monotonous with respect to $E$.
 In other words, $F\subset E$ implies $d_{\Omega,E}\leq d_{\Omega,F}$ in $\Omega$.
\end{lem}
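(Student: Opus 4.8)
The plan is to reduce the whole statement to one elementary monotonicity property of the set‑distance and then split into three cases. The key observation is that, for each fixed $x\in\Omega$, the map $E'\mapsto\operatorname{dist}_{\Omega,E'}(x)$ is order‑reversing: if $E'_1\subset E'_2\subset\Omega$, then in the defining infimum \eqref{eq:PSGD} for $\operatorname{dist}_{\Omega,E'_2}(x)$ we are taking the infimum of $y\mapsto\operatorname{dist}_\Omega(x,y)$ over a larger set of admissible targets, so $\operatorname{dist}_{\Omega,E'_2}(x)\le\operatorname{dist}_{\Omega,E'_1}(x)$. This needs no regularity of $\Omega$ and no existence of minimizing paths.

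Now fix $F\subset E\subset\Omega$ and a point $x\in\Omega$, and distinguish the three cases $x\in F$, $x\in\minus{E}{F}$, and $x\in\minus{\Omega}{E}$, which partition $\Omega$. If $x\in F$, then also $x\in E$, so $\geodis{E}(x)=-\operatorname{dist}_{\Omega,\minus{\Omega}{E}}(x)$ and $\geodis{F}(x)=-\operatorname{dist}_{\Omega,\minus{\Omega}{F}}(x)$; since $F\subset E$ gives $\minus{\Omega}{E}\subset\minus{\Omega}{F}$, the monotonicity above yields $\operatorname{dist}_{\Omega,\minus{\Omega}{F}}(x)\le\operatorname{dist}_{\Omega,\minus{\Omega}{E}}(x)$, and multiplying by $-1$ gives $\geodis{E}(x)\le\geodis{F}(x)$. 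If $x\in\minus{E}{F}$, then $\geodis{E}(x)=-\operatorname{dist}_{\Omega,\minus{\Omega}{E}}(x)\le 0\le\operatorname{dist}_{\Omega,F}(x)=\geodis{F}(x)$. If $x\in\minus{\Omega}{E}$, then $x\in\minus{\Omega}{F}$ as well, so $\geodis{E}(x)=\operatorname{dist}_{\Omega,E}(x)$, $\geodis{F}(x)=\operatorname{dist}_{\Omega,F}(x)$, and $F\subset E$ together with the monotonicity gives $\operatorname{dist}_{\Omega,E}(x)\le\operatorname{dist}_{\Omega,F}(x)$ directly. Combining the three cases yields $\geodis{E}\le\geodis{F}$ on all of $\Omega$.

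I do not anticipate any genuine obstacle: the statement is essentially a bookkeeping exercise with the sign convention in the definition of the signed geodesic distance. The only points to watch are (i) that the three cases really exhaust $\Omega$, and (ii) the sign flip, which converts the order‑reversing behaviour of $\operatorname{dist}_{\Omega,\cdot}$ "inside" $E$ into the desired order‑preserving behaviour of $\geodis{\cdot}$. Note that the argument never uses that $\geodis{E}$ coincides with the Euclidean signed distance, which is precisely why the analogous monotonicity can fail for $d_E$ when $\Omega$ is not convex.
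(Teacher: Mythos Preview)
Your proof is correct and is exactly the argument the paper has in mind: the paper simply says the lemma holds ``by definition'' and gives no further details, and your three-case split is the natural unpacking of that remark. There is nothing to add.
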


\section{Lower semi-continuity of capillary functional}\label{sec:lsc_capillary_functional}
In study of the energy \eqref{eq:E_b_def}, the lower semi-continuity of $C_\beta$ is crucial.
Due to the boundary integral term, this property is not straightforward.
 Nevertheless, it was already shown by Modica \cite{Modica1987} in the case where 
$\Omega$ is a $C^2$ bounded domain in $\mathbb{R}^d$ and $C_\beta$ is of the form:
\begin{equation}\label{eq:modicaCapillary}
    C_\beta(u) := \int_\Omega|\nabla u| + \int_{\pOmega}\tau(x,\gamma u(x))\,\dH{d-1}(x)\tag{ModicaCF}
\end{equation}
where $\tau:\pOmega\times\mathbb{R}\to\mathbb{R}$ is a Borel function which is $1$-Lipschitz continuous
with respect to the second variable. Note that \eqref{eq:modicaCapillary} includes 
\eqref{eq:Capillary_function} as a special case
(set $\tau(x,s) := \beta(x)s$ for each $(x,s)\in\pOmega\times\mathbb{R}$ and $\tau$ turns out to be $\|\beta\|_\infty$-Lipschitz continuous).

For the proof, he invoked a trace inequality for BV functions derived by \cite{AG}.
The inequality is of the form
\begin{equation} \label{ETrace}
	\int_{\partial\Omega}|f-g|\, d\mathcal{H}^{d-1}
	\leq \int_{\Gamma_t}\left|\nabla(f-g)\right|+\left(\frac2t + c\right)
	\int_{\Gamma_t}|f-g|\, dx
\end{equation}
for $f,g\in BV(\Omega)$ with $c$ independent of $f,g$ and $t>0$, where
\[
 \Gamma_t = \left\{ x\in\Omega \bigm| d(x)<t \right\}, \quad
 d(x) = \operatorname{dist}(x,\partial\Omega) = \inf_{y\in\partial\Omega}|x-y|.
\]
This yields
\[
	\int_{\partial\Omega}|f-g|\, d\mathcal{H}^{d-1}
	\leq \int_{\Gamma_t}|\nabla f|+\int_{\Gamma_t}|\nabla g| +\left(\frac2t + c\right)
	\int_{\Gamma_t}|f-g|\, dx.
\]
It turns out that this is enough to prove the lower semi-continuity of $C_\beta$.

In \cite{BellettiniKholmatov2018}, the lower semi-continuity of $C_\beta(u)$ is proved when $\Omega$ is the half space by using an inequality
\begin{equation} \label{Etrace2}
	\int_{\partial\Omega}|f-g|\, d\mathcal{H}^{d-1}
	\leq \int_{\Gamma_t}|\nabla f|+\int_{\Gamma_t}|\nabla g| +
	\int_{\Gamma_t}|f-g|\, dx
\end{equation}
when $\Omega$ is the half space $\mathbb{R}_+^d=\mathbb{R}^{d-1}\times(0,\infty)$.
 In \cite{BellettiniKholmatov2018}, neither the paper \cite{AG} nor \cite{Modica1987} was not mentioned.
 This type of the inequality \eqref{Etrace2} is found in \cite[Proof of Proposition 2.6, (2.11)]{Giu84}, where $\partial\Omega=B_R$ and $\Gamma_t=B_R\times(0,t)$.
 In \cite{Giu84}, it is used that trace is continuous with respect to the strict convergence in BV.
 In this paper, we establish a curved version of \eqref{Etrace2} and prove the lower semi-continuous of $C_\beta(u)$ defined by \eqref{eq:Capillary_function}.
 It works even for unbounded domains provided that it is uniformly $C^2$;
 for the definition, see e.g.\ \cite{BG}.

\begin{lem}\label{lem:d_dim_general_domain_estimate}
    Let $\Omega$ be a uniformly $C^2$ domain in $\mathbb{R}^d$ and let $\kappa_1,\cdots,\kappa_{d-1}$ 
    be the (inward) principal curvatures of $\pOmega$.
 Let $R_0$ be its reach, i.e., the largest number such that the projection $\pi:\Gamma_t\to\partial\Omega$ is well-defined for $t<R_0$, where $\left|x-\pi(x)\right|=d(x)$.
	Then, for every $f,g\in BV(\Omega)$, it holds that
    \begin{multline*}
        \int_{\pOmega}|f - g|\dH{d-1}\leq \\ \int_{\Gamma_t}\prod_{i=1}^{d-1}\frac{1}{1-(\kappa_i\circ\pi) d}|\nabla_\nu f| + \int_{\Gamma_t}\prod_{i=1}^{d-1}\frac{1}{1-(\kappa_i\circ\pi) d}|\nabla_\nu g| + \frac{1}{t}\int_{\Gamma_t}\prod_{i=1}^{d-1}\frac{1}{1-(\kappa_i\circ\pi) d}|f - g|\,dx
    \end{multline*}
    for $t\in(0,R_0)$.
    Here, $\nabla_\nu=\nabla d\cdot\nabla$ denotes the directional derivative in the direction of $\nabla d$ so that $|\nabla_\nu f|$ is well-defined as a Radon measure.
\end{lem}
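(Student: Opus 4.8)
The plan is to pass to the tubular-neighborhood coordinates adapted to $\partial\Omega$, reduce the trace estimate to a one-dimensional estimate along the normal rays, and then change variables. First set $h:=f-g\in BV(\Omega)$; since $|\nabla_\nu(f-g)|\le|\nabla_\nu f|+|\nabla_\nu g|$ as Radon measures, it suffices to prove the estimate with $f,g$ replaced by $h,0$, i.e.\ with a single term $\int_{\Gamma_t}\prod_{i=1}^{d-1}\frac{1}{1-(\kappa_i\circ\pi)d}\,|\nabla_\nu h|$ and the corresponding $L^1$ term on the right. For $t<R_0$ the normal map $\Phi(y,s):=y+s\nu(y)$, $(y,s)\in\partial\Omega\times(0,t)$ with $\nu$ the inward unit normal, is a $C^1$-diffeomorphism onto $\Gamma_t$ with Jacobian $J\Phi(y,s)=\prod_{i=1}^{d-1}(1-\kappa_i(y)s)$, and $\nabla d(\Phi(y,s))=\nu(y)$, so the $s$-direction is exactly the $\nabla d$-direction; writing $\rho(x):=\prod_{i=1}^{d-1}\bigl(1-(\kappa_i\circ\pi)(x)\,d(x)\bigr)^{-1}$ we have $d\tau\,d\mathcal{H}^{d-1}(y)=\rho(x)\,dx$ under $x=\Phi(y,\tau)$.

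The heart of the argument is a one-dimensional estimate, which I would first establish for $h\in C^1(\overline{\Omega})$. For such $h$, $y\in\partial\Omega$, and $0<\tau<t$, the fundamental theorem of calculus along the ray gives $h(y)=h(\Phi(y,\tau))-\int_0^\tau\partial_s(h\circ\Phi)(y,s)\,ds$, where $\partial_s(h\circ\Phi)(y,s)=\nabla h(\Phi(y,s))\cdot\nu(y)=(\nabla_\nu h)(\Phi(y,s))$. Integrating this identity over $\tau\in(0,t)$, taking absolute values, and using $\int_0^t\int_0^\tau g\,ds\,d\tau=\int_0^t g(s)(t-s)\,ds\le t\int_0^t g\,ds$, one obtains
\[
|h(y)|\le\frac{1}{t}\int_0^t|h(\Phi(y,\tau))|\,d\tau+\int_0^t|(\nabla_\nu h)(\Phi(y,s))|\,ds .
\]
Integrating over $\partial\Omega$ with respect to $d\mathcal{H}^{d-1}$ and changing variables by $\Phi$ on each term gives precisely the claimed inequality for $h\in C^1$, since $\int_{\partial\Omega}\int_0^t|h(\Phi(y,\tau))|\,d\tau\,d\mathcal{H}^{d-1}(y)=\int_{\Gamma_t}\rho|h|\,dx$ and likewise with $\nabla_\nu h$.

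For general $h\in BV(\Omega)$ I would pull $h$ back to $\tilde h:=h\circ\Phi\in BV(\partial\Omega\times(0,R_0))$ and invoke the one-dimensional slicing theory of BV functions: for $\mathcal{H}^{d-1}$-a.e.\ $y$ the slice $\tilde h(y,\cdot)$ lies in $BV((0,R_0))$, its one-sided limit at $0$ agrees with the trace of $h$ at $y$, and $\int_{\partial\Omega}|D_s\tilde h(y,\cdot)|((0,t))\,d\mathcal{H}^{d-1}(y)=|D_s\tilde h|(\partial\Omega\times(0,t))$. The one-dimensional estimate $|\tilde h(y,0^+)|\le|\tilde h(y,\tau)|+|D_s\tilde h(y,\cdot)|((0,\tau))$, valid for a.e.\ $\tau$, is averaged over $\tau\in(0,t)$ exactly as above; integrating in $y$ and changing variables — using that $\Phi$ pushes the scalar measure $D_s\tilde h$ forward to $\rho\,(\nabla d\cdot Dh)=\rho\,\nabla_\nu h$, hence $|D_s\tilde h|$ forward to $\rho\,|\nabla_\nu h|$ — finishes the proof. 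A quicker but less self-contained alternative is to approximate $h$ by $h_k\in C^\infty(\overline{\Omega})$ converging strictly in $BV(\Omega)$, apply the $C^1$ case, and pass to the limit; the delicate term is the gradient term, where plain lower semicontinuity goes the wrong way and one instead needs Reshetnyak's continuity theorem applied to the integrand $(x,v)\mapsto\rho(x)\,|\nabla d(x)\cdot v|$ (continuous, bounded on $\overline{\Gamma_t}$, positively $1$-homogeneous in $v$), valid once $|Dh|(\{d=t\})=0$; the remaining (countably many) values of $t$ are then recovered by letting $t_n\uparrow t$ and using monotone convergence over the increasing sets $\Gamma_{t_n}\uparrow\Gamma_t$.

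The main obstacle is exactly this passage from $C^1$ to $BV$ in the gradient term: because both the weight $\rho$ and the direction $\nabla d$ depend on $x$, this term is not a total variation, so one must use either the fine (slicing) structure of BV functions or Reshetnyak continuity rather than naive lower semicontinuity. The subsidiary technical points — that $\Phi^{-1}=(\pi,d)$ identifies the boundary trace of $h$ with the one-sided traces of the one-dimensional slices, and that the change of variables transports the directional-derivative measure together with the weight $\rho$ — are where the uniform $C^2$ regularity of $\Omega$ enters, since it guarantees $d\in C^2$ on $\Gamma_{R_0}$ and hence that $\Phi$ is a $C^1$-diffeomorphism with the stated Jacobian.
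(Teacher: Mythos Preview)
Your proof is correct and follows essentially the same route as the paper: normal (tubular) coordinates, a one-dimensional averaging estimate along the rays $s\mapsto y+s\nu(y)$, and the change of variables with Jacobian $\prod_i(1-\kappa_i s)$; the only cosmetic difference is that you reduce to $h=f-g$ at the outset, whereas the paper keeps $f$ and $g$ separate via the intermediate averages $f_t(y)=\tfrac1t\int_0^t f(y+s\nu(y))\,ds$ and uses the triangle inequality $|f-g|\le|f-f_t|+|f_t-g_t|+|g_t-g|$. Your discussion of the passage from $C^1$ to general $BV$ (via slicing or Reshetnyak continuity) is in fact more careful than the paper's, which simply writes the derivative along the ray formally.
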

\begin{proof}
Since $\pOmega$ is uniformly $C^2$, the reach $R_0$ can be taken positive.
 We take $t\in(0,R_0)$ to see that the normal coordinate system is available in $\Gamma_t$. Precisely, for each $x\in\Gamma_t$, there exists a unique $y\in\pOmega$ such that $x = y + \nu(y)d(x)$, where $\nu(y)$ denotes the inward unit normal vector to $\pOmega$ at $y$. 
    Then, we are able to use a $C^1$ change of variables between $\Gamma_t$ and $\pOmega\times(-t,0)$ defined by $\Phi:\Gamma_t\ni x\mapsto (\pi(x),d(x))\in\pOmega\times(-t,0)$; see e.g.\ \cite[Chapter 14, Appendix]{GilbargTrudinger1983}.
    For each $y\in\pOmega$, set $f_t(y) := \frac{1}{t}\int_0^tf(y + s\nu(y))ds$. Then, we compute
\begin{align*}
        &\int_{\pOmega}|f(y) - f_t(y)|\dH{d-1} = \int_{\pOmega}\left|f(y) - \frac{1}{t}\int_0^tf(y+s\nu(y))\,ds\right|\dH{d-1}(y) \\
        &= \int_0^t\frac{1}{t}ds\int_{\pOmega}\dH{d-1}\left|f(y) - f(y+s\nu(y))\right| = \int_0^t\frac{1}{t}ds\int_{\pOmega}\dH{d-1}\left|\int_0^s\deriv{u}f(y+u\nu(y))\,du\right| \\
        &= \int_0^t\frac{1}{t}ds\int_{\pOmega}\dH{d-1}(y)\left|\int_0^s\nu(y)\cdot\nabla f(y+u\nu(y))du\right| \\
        &\leq \int_0^t\frac{1}{t}ds\int_{\pOmega}\dH{d-1}(y)\int_0^s\,du\left|\nabla_\nu f(y+u\nu(y))\right| \\
        &= \int_0^t\frac{1}{t}du\int_{\pOmega}\dH{d-1}(y)\int_u^t\,ds\left|\nabla_\nu f(y+u\nu(y))\right| \leq \int_0^t\frac{1}{t}\cdot tdu\int_{\pOmega}\dH{d-1}(y)|\nabla_\nu f(y+u\nu(y))|\\
        &= \int_0^tdu\int_{\pOmega}\dH{d-1}\frac{1}{J(\Phi(y,u))}\cdot J(\Phi(y,u))|\nabla_\nu f(y+u\nu(y))| = \int_{\Gamma_t}\frac{1}{J(\Phi(x))}|\nabla_\nu f|,
    \end{align*}
    where $J$ denotes the Jacobian of $\Phi$.
    For the difference $f_t-g_t$, we obtain
    \begin{align*}
        &\int_{\pOmega}|f_t(y) - g_t(y)|\dH{d-1}(y) = \int_{\pOmega}\dH{d-1}(y)\left|\frac{1}{t}\int_0^tf(y+u\nu(y))du - \frac{1}{t}\int_0^tg(y+u\nu(y))du\right| \\
        &\leq \int_0^t\frac{1}{t}du\int_{\pOmega}\dH{d-1}\left|f(y+u\nu(y)) - g(y+u\nu(y))\right|  \\
        &= \frac{1}{t}\int_0^tdu\int_{\pOmega}\dH{d-1}(y)\frac{1}{J(\Phi(y,u))}\cdot J(\Phi(y,u))|f(y+u\nu(y)) - g(y+u\nu(y))| \\
        &= \frac{1}{t}\int_{\Gamma_t}\frac{1}{J(\Phi(x))}|f  - g| \,dx.
    \end{align*}
    Recall the exact form of the Jacobian $J(\Phi(x))$ (see e.g.\ \cite[Chapter 14, Appendix]{GilbargTrudinger1983}):
    \begin{equation*}
        J(\Phi(x)) = \prod_{i=1}^{d-1}(1 - \kappa_i(\pi(x))d(x))\ \ \mbox{for}\ \ x\in\Gamma_t.
    \end{equation*}
    Therefore,  the desired inequality follows by the triangle inequality.
\end{proof}

\begin{cor}\label{cor:d_dim_general_domain_estimate}
    Let $\Omega$ be a uniformly $C^2$ domain in $\mathbb{R}^d$ and $\mu\in(0,\infty]$ be the supremum of radius of inscribed circles of $\pOmega$. Then,
    \begin{equation*}
        \int_{\pOmega}|f - g|\,\dH{d-1}\leq \left(\frac{\mu}{\mu - t}\right)^{d-1}\int_{\Gamma_t}|\nabla f| + \left(\frac{\mu}{\mu - t}\right)^{d-1}\int_{\Gamma_t}|\nabla g| + \frac{1}{t}\left(\frac{\mu}{\mu - t}\right)^{d-1}\int_\Omega|f - g|\,dx
    \end{equation*}
    holds for every $t\in(0,\mu)$, $f,g\in BV(\Omega)$.
 In the case $\mu=\infty$, $\mu/(\mu-t)$ should be interpreted as $1$, irrelevant to $t<\infty$.
\end{cor}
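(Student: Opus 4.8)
The plan is to derive Corollary \ref{cor:d_dim_general_domain_estimate} directly from Lemma \ref{lem:d_dim_general_domain_estimate} by bounding the Jacobian factor $\prod_{i=1}^{d-1}(1-(\kappa_i\circ\pi)d)^{-1}$ from above by a constant depending only on $t$ and $\mu$, and then discarding the distinction between the normal derivative $|\nabla_\nu f|$ and the full variation $|\nabla f|$. First I would recall the elementary geometric fact that relates $\mu$, the supremum of radii of inscribed balls of $\partial\Omega$ (equivalently, the interior rolling-ball radius), to the inward principal curvatures: at every boundary point each inward principal curvature satisfies $\kappa_i \le 1/\mu$. Hence for $x\in\Gamma_t$ with $d(x)=d<t<\mu$ we have $0 < 1-\kappa_i(\pi(x))d$ (this positivity is exactly what makes the normal coordinates nonsingular on $\Gamma_t$, already used in Lemma \ref{lem:d_dim_general_domain_estimate}) and, since $\kappa_i$ could a priori be negative, the worst case is $\kappa_i = 1/\mu$, which gives
\[
 \frac{1}{1-\kappa_i(\pi(x))d} \le \frac{1}{1-t/\mu} = \frac{\mu}{\mu-t}.
\]
Taking the product over $i=1,\dots,d-1$ yields $\prod_{i=1}^{d-1}(1-(\kappa_i\circ\pi)d)^{-1}\le \big(\mu/(\mu-t)\big)^{d-1}$ uniformly on $\Gamma_t$.

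Next I would substitute this uniform bound into the three integrals on the right-hand side of Lemma \ref{lem:d_dim_general_domain_estimate}. Since the Jacobian weight is a nonnegative measurable function bounded above by the constant $\big(\mu/(\mu-t)\big)^{d-1}$, and $|\nabla_\nu f|$, $|\nabla_\nu g|$, $|f-g|\,dx$ are nonnegative measures, each weighted integral is dominated by the constant times the unweighted integral over $\Gamma_t$. Then I would use the obvious pointwise inequality $|\nabla_\nu f| \le |\nabla f|$ as measures (the normal component of a vector measure is controlled by its total mass) to replace $\int_{\Gamma_t}|\nabla_\nu f|$ by $\int_{\Gamma_t}|\nabla f|$, and similarly for $g$; and I would enlarge the domain of the last integral from $\Gamma_t$ to all of $\Omega$, which only increases it. This produces exactly the claimed inequality. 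For the case $\mu=\infty$ (e.g.\ $\Omega$ a half-space or, more generally, a domain all of whose inward principal curvatures are $\le 0$), every factor $1/(1-\kappa_i d)\le 1$, so the constant is $1$ regardless of $t<\infty$, matching the stated convention; this recovers the flat inequality \eqref{Etrace2}.

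There is no serious obstacle here — the corollary is a quantitative but routine specialization of the lemma. The one point that deserves a sentence of care is the geometric identification $\sup_i \sup_{\partial\Omega}\kappa_i = 1/\mu$: one should note that the supremum of radii of inscribed balls touching $\partial\Omega$ equals the reciprocal of the supremum of the inward principal curvatures when $\Omega$ is uniformly $C^2$, so that $R_0 \ge \mu$ and the range $t\in(0,\mu)$ is admissible in the lemma. (If one is uneasy about this identification in full generality, one may simply take $\mu$ to be \emph{defined} as $1/\sup_{\partial\Omega}\max_i(\kappa_i)_+$, with $1/0:=\infty$, and the argument is unchanged.) Everything else — the monotonicity of the integral under enlarging the domain, the bound $|\nabla_\nu f|\le|\nabla f|$, and pulling a constant out of an integral — is immediate.
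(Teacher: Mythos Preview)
Your proposal is correct and follows essentially the same route as the paper: bound each principal curvature by $1/\mu$ to get $\prod_i(1-\kappa_i d)^{-1}\le(\mu/(\mu-t))^{d-1}$, plug this into Lemma~\ref{lem:d_dim_general_domain_estimate}, and use $|\nabla_\nu f|\le|\nabla f|$. Your additional remarks on the identification $\sup\kappa_i=1/\mu$, the admissibility $t<\mu\le R_0$, and the enlargement $\Gamma_t\to\Omega$ in the last integral are welcome clarifications that the paper leaves implicit.
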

\begin{proof}
    By the selection of $\mu$, it follows that $\kappa_i(\pi(x))\leq 1/\mu$ for all $1\leq i\leq d-1$ and $x\in\Gamma_t$. Moreover, we have $d(x) < t$. Thus, we can estimate as follows: 
    \begin{equation*}
        \prod_{i=1}^{d-1}\frac{1}{1-\kappa_i(\pi(x))d(x)}\leq \prod_{i=1}^{d-1}\frac{1}{1-\frac{1}{\mu}\cdot t} = \left(\frac{\mu}{\mu-t}\right)^{d-1}.
    \end{equation*}
    Hence, the desired inequality is immediately derived from Lemma \ref{lem:d_dim_general_domain_estimate} since $|\nabla_\nu f|\leq|\nabla f|$ by the Schwarz inequality and $|\nabla d|=1$.
\end{proof}

\begin{prop}\label{prop:lsc_d_dim_capillary}
    Let $\Omega$ be a uniformly $C^2$ domain in $\mathbb{R}^d$. Then, $C_\beta$ is lower semi-continuous with respect to $L^1(\Omega)$ whenever $\|\beta\|_\infty \leq 1$.
    (if $\Omega$ has a finite measure, $L^1$ can be replaced by $L^p$ for any $p \in [1,\infty)$ since $L^p(\Omega)\subset L^1(\Omega)$.)
\end{prop}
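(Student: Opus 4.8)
The plan is to split the proof according to whether $\|\beta\|_\infty<1$ or $\|\beta\|_\infty=1$, and to reduce the second case to the first by a scaling-and-contradiction argument in the spirit of \cite[Proof of Lemma 2]{CaffarelliMellet2007}. Throughout, write $\Gamma_t=\{x\in\Omega\mid d(x)<t\}$ and $c_t:=\bigl(\mu/(\mu-t)\bigr)^{d-1}$ (so $c_t\equiv1$ when $\mu=\infty$), and recall $c_t\to1$ as $t\to0^+$. Let $u_n\to u$ in $L^1(\Omega)$; I want $\liminf_nC_\beta(u_n)\ge C_\beta(u)$, and may assume $L:=\liminf_nC_\beta(u_n)<\infty$ and, after passing to a subsequence, $C_\beta(u_n)\to L$.

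\emph{The case $\|\beta\|_\infty<1$.} First I would fix $t\in(0,\mu)$ small enough that $\|\beta\|_\infty c_t<1$. Using Corollary \ref{cor:d_dim_general_domain_estimate} with $g\equiv0$, together with $\bigl|\int_{\pOmega}\beta\gamma u_n\,\dH{d-1}\bigr|\le\|\beta\|_\infty\int_{\pOmega}|\gamma u_n|\,\dH{d-1}$ and $\int_{\Gamma_t}|\nabla u_n|\le\int_\Omega|\nabla u_n|$, one gets
\begin{equation*}
    C_\beta(u_n)\ \ge\ (1-\|\beta\|_\infty c_t)\int_\Omega|\nabla u_n|-\frac{\|\beta\|_\infty c_t}{t}\int_\Omega|u_n|\,dx ,
\end{equation*}
so, the $L^1$-norms being bounded, $\sup_n\int_\Omega|\nabla u_n|<\infty$; hence $u\in BV(\Omega)$ by lower semicontinuity of the total variation. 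Next I would apply Corollary \ref{cor:d_dim_general_domain_estimate} with $f=u_n$ and $g=u$ to bound $\int_{\pOmega}|\gamma u_n-\gamma u|\,\dH{d-1}$, feed this into $\int_{\pOmega}\beta\gamma u_n\ge\int_{\pOmega}\beta\gamma u-\|\beta\|_\infty\int_{\pOmega}|\gamma u_n-\gamma u|\,\dH{d-1}$, split $\int_\Omega|\nabla u_n|=\int_{\Omega\setminus\overline{\Gamma_t}}|\nabla u_n|+\int_{\Gamma_t}|\nabla u_n|$, and drop the nonnegative term $(1-\|\beta\|_\infty c_t)\int_{\Gamma_t}|\nabla u_n|$. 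Taking $n\to\infty$ (lower semicontinuity of the total variation on the open set $\Omega\setminus\overline{\Gamma_t}$, and $u_n\to u$ in $L^1$) gives
\begin{equation*}
    L\ \ge\ \int_{\Omega\setminus\overline{\Gamma_t}}|\nabla u|+\int_{\pOmega}\beta\gamma u\,\dH{d-1}-\|\beta\|_\infty c_t\int_{\Gamma_t}|\nabla u| ,
\end{equation*}
and finally letting $t\to0^+$ (so $c_t\to1$, $\int_{\Gamma_t}|\nabla u|\to0$ because $|\nabla u|$ is a finite measure with $\bigcap_{t>0}\Gamma_t=\emptyset$, and $\int_{\Omega\setminus\overline{\Gamma_t}}|\nabla u|\to\int_\Omega|\nabla u|$) yields $L\ge C_\beta(u)$.

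\emph{The case $\|\beta\|_\infty=1$.} Here I would argue by contradiction, assuming $L<C_\beta(u)$ (in particular $L<\infty$). For $\delta\in(0,1)$ put $\beta_\delta:=(1-\delta)\beta$, so $\|\beta_\delta\|_\infty=1-\delta<1$ and the case already done gives $\liminf_nC_{\beta_\delta}(u_n)\ge C_{\beta_\delta}(u)$. Combining this with the elementary identity $C_{\beta_\delta}(v)=C_\beta(v)-\delta\int_{\pOmega}\beta\gamma v\,\dH{d-1}$ and $C_\beta(u_n)\to L$ gives $L-\delta\limsup_n\int_{\pOmega}\beta\gamma u_n\,\dH{d-1}\ge C_\beta(u)-\delta\int_{\pOmega}\beta\gamma u\,\dH{d-1}$; dividing by $\delta$ and letting $\delta\to0^+$ forces $\limsup_n\int_{\pOmega}\beta\gamma u_n\,\dH{d-1}=-\infty$, hence $\int_\Omega|\nabla u_n|=C_\beta(u_n)-\int_{\pOmega}\beta\gamma u_n\,\dH{d-1}\to+\infty$: the total variations blow up. To close the contradiction I would return to the \emph{sharp}, Jacobian-weighted trace inequality of Lemma \ref{lem:d_dim_general_domain_estimate} (with $g\equiv0$), bounding $-\int_{\pOmega}\beta\gamma u_n\le\int_{\pOmega}|\gamma u_n|$ by its right-hand side. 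The point is that the weights $\prod_i\bigl(1-(\kappa_i\circ\pi)d\bigr)^{-1}$ there exactly cancel the Jacobian of the normal-coordinate change of variables, so the variation of $u_n$ that concentrates in $\Gamma_t$ is matched by $-\int_{\pOmega}\beta\gamma u_n$ up to errors of order $t^{-1}\int_{\Gamma_t}|u_n|\,dx$ and $\int_{\Gamma_t}|\nabla u|$, which tend to $0$ on letting first $n\to\infty$ (as $u_n\to u$ in $L^1$) and then $t\to0^+$ — incompatible with $C_\beta(u_n)\to L<\infty$. (The subcase $u\notin BV(\Omega)$, where $C_\beta(u)=\infty$, is handled the same way, the conclusion being that $C_\beta(u_n)\to\infty$.)

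The step I expect to be the main obstacle is precisely this last one. For a genuinely curved bounded domain the constant in Corollary \ref{cor:d_dim_general_domain_estimate} is strictly greater than $1$, so when $\|\beta\|_\infty=1$ the boundary integral cannot be absorbed into the total variation and mere lower semicontinuity of the total variation does not suffice; one has to use the finer inequality of Lemma \ref{lem:d_dim_general_domain_estimate}, which is asymptotically sharp on boundary layers, and exploit that $L^1$-convergence forces any boundary layer responsible for the blow-up of the total variation to carry vanishing mass, hence a vanishing net contribution to $C_\beta$. Keeping track of this cancellation cleanly is what the scaling-and-contradiction scheme of \cite[Proof of Lemma 2]{CaffarelliMellet2007} is designed to do, and adapting it to the curved geometry via Lemma \ref{lem:d_dim_general_domain_estimate} is the heart of the matter.
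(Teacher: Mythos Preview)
Your treatment of the case $\|\beta\|_\infty<1$ is correct and essentially the same as the paper's: apply Corollary~\ref{cor:d_dim_general_domain_estimate}, choose $t$ so small that $\|\beta\|_\infty c_t<1$, split off the boundary layer, use lower semicontinuity of the total variation on $\Omega\setminus\overline{\Gamma_t}$, and let $t\to0$.

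The case $\|\beta\|_\infty=1$ has a genuine gap, and you have correctly located it yourself. Your scaling argument does show that if lower semicontinuity fails then $\int_{\pOmega}\beta\gamma u_n\to-\infty$ and hence $\int_\Omega|\nabla u_n|\to+\infty$. But your plan to ``close the contradiction'' via Lemma~\ref{lem:d_dim_general_domain_estimate} does not work as described. The weights $\prod_i(1-(\kappa_i\circ\pi)d)^{-1}$ in that lemma are \emph{not} identically $1$ on a curved domain; they are only $\leq 1+\varepsilon(t)$ with $\varepsilon(t)\to0$ as $t\to0$. Feeding this into $C_\beta(u_n)\geq\int_\Omega|\nabla u_n|-\int_{\pOmega}|\gamma u_n|$ yields at best
\[
C_\beta(u_n)\ \geq\ \int_{\Omega\setminus\Gamma_t}|\nabla u_n|\ -\ \varepsilon(t)\int_{\Gamma_t}|\nabla u_n|\ -\ C(t),
\]
and since $\int_{\Gamma_t}|\nabla u_n|\to\infty$ for each fixed $t>0$ (this follows from the trace inequality itself, as $\int_{\pOmega}|\gamma u_n|\to\infty$), the right-hand side tends to $-\infty$: no contradiction. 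There is no ``exact cancellation'' to exploit, and letting $t$ depend on $n$ leads to a genuinely delicate balance that your sketch does not resolve.

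The paper closes this gap by a different device: before running the contradiction, it uses the already-established lower semicontinuity of $C_{\beta/2}$ together with Proposition~\ref{prop:HahnBanach} and Remark~\ref{rem:BiasCanbeZero} to find $f\in L^\infty(\Omega)$ with $C_{\beta,f}(v):=C_\beta(v)+\int_\Omega fv\,dx\geq0$ for all $v$. One then runs the contradiction for $C_{\beta,f}$ (equivalent, since the added term is $L^1$-continuous). The nonnegativity is exactly what is missing from your argument: from $C_{\beta,f}(u_i)\geq0$ and $0<\lambda<1$ one gets $C_{\beta,f}(u_i)\geq\lambda C_{\beta,f}(u_i)=(\lambda-1)\int_\Omega|\nabla u_i|+C_{\lambda\beta,\lambda f}(u_i)$, and combining this with the lower semicontinuity of $C_{\lambda\beta,\lambda f}$ yields $\delta/4<(1-\lambda)\int_\Omega|\nabla u_i|$ for all $\lambda$ near $1$, which is the desired contradiction upon $\lambda\nearrow1$. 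The Hahn--Banach step is the missing idea; once you have it, no further appeal to Lemma~\ref{lem:d_dim_general_domain_estimate} is needed in the critical case.
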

\begin{proof}
    First, we prove the assertion when $\|\beta\|_\infty < 1$.
    Let $\{u_i\}_i$ be a sequence in $L^1(\Omega)$ and $u\in L^1(\Omega)$. 
    Suppose that $u_i\to u$ in $L^1(\Omega)$ as $i\to\infty$.
    Then, it suffices to prove that
    $\limsup_{i\to\infty}\{C_\beta(u) - C_\beta(u_i)\}\leq 0$. For simplicity, set $\delta_{\mu,t} := \|\beta\|_\infty(\mu/(\mu-t))^{d-1}$.
    Then, we can take $t > 0$ so small that $\delta_{\mu,t} < 1$.
    For such a $t>0$ and for each $i\in\mathbb{N}$, we compute
    \begin{multline*}
        C_\beta(u) - C_\beta(u_i) = \int_\Omega|\nabla u| - \int_\Omega|\nabla u_i| + \int_{\pOmega}\beta(\gamma u - \gamma u_i)\,\dH{d-1}\\
        \leq \int_\Omega |\nabla u| - \int_\Omega|\nabla u_i| + \delta_{\mu,t}\int_{\Gamma_t}|\nabla u| + \delta_{\mu,t}\int_{\Gamma_t}|\nabla u_i| +\frac{1}{t}\delta_{\mu,t}\int_\Omega|u - u_i|\\
        \leq \int_\Omega |\nabla u| - \int_{\Omega\backslash\Gamma_t}|\nabla u_i| + \int_{\Gamma_t}|\nabla u| + \frac{1}{t}\int_{\Omega}|u - u_i| \\
        \leq \int_\Omega|\nabla u| - \inf_{j\geq i}\int_{\Omega\backslash\Gamma_t}|\nabla u_j| + \int_{\Gamma_t}|\nabla u_i| + \frac{1}{t}\int_\Omega|u-u_i|.
    \end{multline*}
    Letting $i\to\infty$ gives
    \begin{equation*}
        \limsup_{i\to\infty}\{C_\beta(u)-C_\beta(u_i)\} \leq \int_\Omega|\nabla u| - \liminf_{i\to\infty}\int_{\Omega\backslash\Gamma_t}|\nabla u_i| + \int_{\Gamma_t}|\nabla u|\leq 2\int_{\Gamma_t}|\nabla u|.
    \end{equation*}

    Here, we have used the lower semi-continuity of $u\mapsto\int_\Omega|\nabla u|$ with respect to $L^1(\Omega)$ - topology to obtain the last inequality.
    Since $\chi_{\Gamma_t}$ converges to $0$ as $t\to 0$ pointwise and $u\in BV(\Omega)$, 
    the Lebesgue convergence theorem gives the desired inequality by letting $t\to 0$.\newline

   We next treat the case where $\|\beta\|_\infty = 1$. We cannot apply Proposition \ref{prop:HahnBanach} directly for $C_\beta(u)$ since we do not know if it is lower semi-continuous.
   We know $C_{\beta/2}$ is lower semi-continuous by the above argument.
   By Proposition \ref{prop:HahnBanach} and Remark \ref{rem:BiasCanbeZero}, there is $g\in L^\infty(\Omega)$ such that $C_{\beta/2}(u) + \int_\Omega gu\,dx$ is non-negative for all $u\in L^1(\Omega)$.
   We set
\[
	C_{\beta,f}(u) := C_\beta(u) + \int_\Omega fu\,dx
\]
and observe that $C_{\beta,f}(u)\geq0$ for all $u\in L^1(\Omega)$ if we take $f=2g\in L^\infty(\Omega)$.
    It suffices to prove that $C_{\beta,f}$ is lower semi-continuous because $u\mapsto\int_\Omega fudx$ is continuous.
    Let us argue by contradiction. Suppose that $C_{\beta,f}$ is not lower semi-continuous.
    Then, there exist a $\delta >0$, a $u\in L^1(\Omega)$, and a sequence $\{u_i\}_i\subset L^1(\Omega)$ with $u_i\to u$ in $L^1(\Omega)$ such that $C_{\beta,f}(u) - \delta > C_{\beta,f}(u_i)$ for every $i\in\mathbb{N}$.
    We can choose $\lambda_0\in(0,1)$ such that $C_{\lambda\beta,\lambda f}(u) - \delta / 2 > C_{\beta,f}(u_i)$ for all $i\in\mathbb{N}$ whenever $\lambda_0\leq \lambda<1$.
    Since we already know that $C_{\lambda\beta,\lambda f}$ is lower semi-continuous,
    we have $\liminf_{j\to\infty} C_{\lambda\beta,\lambda f}(u_j) - \delta / 2 > C_{\beta,f}(u_i)$ (for all $i$).
    For this $\delta$, there exists a large $k\in\mathbb{N}$ such that $\inf_{j\geq k}C_{\lambda\beta, \lambda f}(u_j) - \delta / 4 > C_{\beta,f}(u_i)$.
    Then, we deduce
    
    \begin{multline}\label{eq:lsc_b_eq_1}
        \inf_{j\geq k}C_{\lambda\beta,\lambda f}(u_j) - \frac{\delta}{4} > C_{\beta,f}(u_i) \geq \lambda C_{\beta,f}(u_i) = (\lambda - 1)\int_\Omega|\nabla u_i| + C_{\lambda\beta,\lambda f}(u_i) \\\geq (\lambda-1)\int_\Omega |\nabla u_i| + \inf_{j\geq k}C_{\lambda\beta,\lambda f}(u_j).
    \end{multline}

    Here, the second inequality is derived by $C_{\beta,f}\geq 0$.
    The estimate \eqref{eq:lsc_b_eq_1} leads
    \begin{equation*}
        \frac{\delta}{4} < (1-\lambda)\int_\Omega|\nabla u_i|.
    \end{equation*}
    Since the above estimate is valid for all $\lambda\in[\lambda_0,1)$, letting $\lambda\nearrow 1$ yields a contradiction.
    Therefore, we conclude that $C_{\beta,f}$ is lower semi-continuous. The proof is complete.
\end{proof}

\section{Subdifferential of capillary functional}\label{sec:SubdifferentialOfCapillaryFunctional}
The unique solution $w^h_E$ to the minimizing problem $\operatornamewithlimits{min}_{u\in L^2(\Omega)\cap BV(\Omega)}{E^h(u)}$ 
has been comuputed as $w^h_E = d_E - \pi_{hK}d_E$ where $K = \partial C_0(0)$ and $\pi_{hK}$ 
denotes the orthogonal projection of $L^2(\Omega)$ onto $hK$ (see the discussion in \cite{Chambolle2004_TV}). 
Because of this formula, it will be useful to characterize the set $K_\beta := \partial C_\beta(0)$ 
for capturing the behaviour of the solution to the minimizing problem $\operatornamewithlimits{min}_{u\in L^2(\Omega)\cap BV(\Omega)}{\chambolleEnergy{\beta}{h}(u)}$.
To this end, let us recall an approach to the characterization due to an unpublished work by Alter explained in the book by Caselles et al. \cite{Mazon}.\newline

Let $H$ be a Hilbert space and $\Phi:H\to [0,\infty]$. 
For this $\Phi$, one can define a function $\widetilde{\Phi}:H\to [0,\infty]$ by
\begin{equation}
    \widetilde{\Phi}(u) := \sup_{v\in H}{\frac{\left<u,v\right>}{\Phi(v)}}\ \ \mbox{for}\ \ u\in H.
\end{equation}

\begin{rem}
    Suppose that $D(\partial\Phi)\neq\emptyset$ and $\Phi$ is positively homogeneous of degree $1$. Then, it is easy to see that 
    $\Phi^*(p) = I_K(p)$ where $K = \partial\Phi(0)$ and 
    \begin{equation*}
        I_K(p) := \begin{cases}
            0\ \ \mbox{if}\ \ p\in K,\\
            \infty\ \ \mbox{otherwise}.
        \end{cases}
    \end{equation*} 
Here, $\Phi^*$ denotes the Fenchel conjugate defined by \eqref{eq:FenchelConjugate}.
 The function $\widetilde{\Phi}$ is the support function of $\left\{v\bigm|\Phi(v)\leq1\right\}$ and it is positively homogeneous of degree $1$.
 The set $\left\{p\bigm|\widetilde{\Phi}(p)\leq1\right\}$ equals to $K$ which equals to the set $\left\{p\bigm|\Phi^*(p)<\infty\right\}$.
\end{rem}

\begin{prop}[\cite{Mazon}, Lemma 1.5]\label{prop:biconjugateOrder} 
    Let $\Phi_1,\Phi_2 : H\to[0,\infty]$. If $\Phi_1\leq\Phi_2$, then it holds that $\wt{\Phi_1}\geq\wt{\Phi_2}$.
\end{prop}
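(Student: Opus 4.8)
The plan is to read the inequality straight off the definition of $\widetilde{\Phi}$, the only point of care being the sign of the numerator $\langle u,v\rangle$ in the ratio $\langle u,v\rangle/\Phi(v)$.

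Fix $u\in H$, and adopt the usual conventions $0/0=0$, $c/0=+\infty$ for $c>0$, $c/0=-\infty$ for $c<0$, and $c/\infty=0$ for $c\ge 0$. First I would record the elementary fact that, since $t\mapsto 1/t$ is non-increasing on $(0,\infty)$, the hypothesis $0\le \Phi_1(v)\le\Phi_2(v)$ forces
\[
  \frac{\langle u,v\rangle}{\Phi_1(v)} \ \ge\ \frac{\langle u,v\rangle}{\Phi_2(v)}
  \qquad\text{for every } v\in H \text{ with } \langle u,v\rangle\ge 0,
\]
all four boundary cases being covered by the stated conventions. Taking the supremum over $\{v\in H : \langle u,v\rangle\ge 0\}$ on both sides, and noting that $\widetilde{\Phi_1}(u)=\sup_{v\in H}\langle u,v\rangle/\Phi_1(v)$ is at least this restricted supremum, I obtain
\[
  \widetilde{\Phi_1}(u) \ \ge\ \sup_{\langle u,v\rangle\ge 0}\frac{\langle u,v\rangle}{\Phi_2(v)} .
\]

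It then remains to identify the right-hand side with $\widetilde{\Phi_2}(u)$. Enlarging the index set from $\{v:\langle u,v\rangle\ge 0\}$ to all of $H$ only introduces terms $\langle u,v\rangle/\Phi_2(v)\le 0$, namely those with $\langle u,v\rangle<0$, and these cannot raise the supremum because the restricted supremum is already $\ge 0$ — for instance $v=0$ contributes the value $0$. Hence $\sup_{\langle u,v\rangle\ge 0}\langle u,v\rangle/\Phi_2(v)=\widetilde{\Phi_2}(u)$, and combining the two displays gives $\widetilde{\Phi_1}(u)\ge\widetilde{\Phi_2}(u)$; since $u$ was arbitrary, $\widetilde{\Phi_1}\ge\widetilde{\Phi_2}$. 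There is no genuinely hard step: the only thing to watch is that the monotonicity of $t\mapsto 1/t$ \emph{reverses} the pointwise comparison of the two ratios on exactly the $v$ with $\langle u,v\rangle<0$, which is why one restricts to $\{\langle u,v\rangle\ge 0\}$ before, not after, passing to the supremum. If \cite{Mazon} instead takes the defining supremum only over $v$ with $\Phi(v)>0$, the argument goes through verbatim with that restriction imposed throughout.
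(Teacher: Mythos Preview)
Your argument is correct: restricting to $\{v:\langle u,v\rangle\ge 0\}$ before comparing ratios is exactly the right move, and the observation that the restricted supremum already equals $\widetilde{\Phi_2}(u)$ (since the $v=0$ term guarantees it is $\ge 0$) closes the loop cleanly. The paper itself supplies no proof of this proposition, simply citing \cite{Mazon}, Lemma~1.5; your direct-from-the-definition argument is the standard one and there is nothing further to compare.
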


\begin{prop}[\cite{Mazon}, Proposition 1.6] \label{PAdual} 
    If $\Phi$ is convex, lower semi-continuous and positively homogeneous of degree $1$, then $\widetilde{\widetilde{\Phi}} = \Phi$.
\end{prop}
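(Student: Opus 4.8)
\emph{Proof plan.}
The plan is to obtain $\widetilde{\widetilde{\Phi}}=\Phi$ by using, twice, the identification of $\widetilde{\Phi}$ with a support function that is recorded in the Remark immediately preceding Proposition~\ref{prop:biconjugateOrder}, and then closing the loop with the polar duality between Minkowski gauges and support functions. I will tacitly assume $\Phi\not\equiv\infty$; this is harmless, being automatic in every application (where $\Phi$ is a total variation or a capillary functional), and the degenerate case $\Phi\equiv\infty$ is only an artefact of the conventions for the quotient $\langle u,v\rangle/\Phi(v)$.

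First I would set $C:=\{v\in H\mid\Phi(v)\leq1\}$ and collect its properties. Positive $1$-homogeneity forces $\Phi(0)=0$, so $0\in C$; convexity and lower semi-continuity of $\Phi$ make $C$ convex and closed. Next I would observe that $\Phi$ is exactly the Minkowski gauge $\gamma_C$ of $C$, since for $t>0$ one has $u/t\in C\iff\Phi(u/t)\leq1\iff\Phi(u)\leq t$ by $1$-homogeneity, so that $\gamma_C(u)=\inf\{t>0\mid u/t\in C\}=\Phi(u)$, with the borderline values $\Phi(u)\in\{0,\infty\}$ covered as well. Finally I would note $D(\partial\Phi)\neq\emptyset$: by Proposition~\ref{prop:HahnBanach} together with Remark~\ref{rem:BiasCanbeZero} there is $p\in H$ with $\Phi(u)\geq\langle p,u\rangle$ for all $u\in H$, i.e.\ $p\in\partial\Phi(0)$, so the Remark preceding Proposition~\ref{prop:biconjugateOrder} is applicable to $\Phi$.

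That Remark then gives $\widetilde{\Phi}=\sigma_C$, the support function of $C$. Being the support function of a nonempty set containing the origin, $\widetilde{\Phi}$ is in turn convex, lower semi-continuous, positively $1$-homogeneous, nonnegative, finite at $0$, and has $0\in\partial\widetilde{\Phi}(0)$; hence the very same Remark applies to $\widetilde{\Phi}$ in place of $\Phi$, and yields that $\widetilde{\widetilde{\Phi}}$ is the support function of the sublevel set $\{p\in H\mid\widetilde{\Phi}(p)\leq1\}=\{p\in H\mid\sigma_C(p)\leq1\}=C^{\circ}$, the polar of $C$. Thus $\widetilde{\widetilde{\Phi}}=\sigma_{C^{\circ}}$. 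To finish I would invoke the standard identity $\sigma_{C^{\circ}}=\gamma_C$, valid for any closed convex $C$ with $0\in C$ (a form of the bipolar theorem), and combine it with $\gamma_C=\Phi$ from the previous step to conclude $\widetilde{\widetilde{\Phi}}=\sigma_{C^{\circ}}=\gamma_C=\Phi$. If one prefers to avoid quoting gauge duality, the same conclusion follows from $\Phi^{*}=I_{C^{\circ}}$ (the first part of the Remark, using $\partial\Phi(0)=C^{\circ}$) and the Fenchel--Moreau biconjugation $\Phi=\Phi^{**}=(I_{C^{\circ}})^{*}=\sigma_{C^{\circ}}$.

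I do not expect a genuinely hard step here; the main obstacle is bookkeeping. One must verify that each application of $\widetilde{(\,\cdot\,)}$ keeps us inside the class to which the Remark applies (properness, positive $1$-homogeneity, nonemptiness of the subdifferential domain), and one must pin down the conventions for $\langle u,v\rangle/\Phi(v)$ when $\Phi(v)$ equals $0$ or $\infty$, since these are precisely what underlie the identities $\widetilde{\Phi}=\sigma_C$ and $\{p\mid\widetilde{\Phi}(p)\leq1\}=C^{\circ}$. In particular, establishing $\partial\Phi(0)=C^{\circ}$ requires, in the directions $v$ with $\Phi(v)=0$, the observation that $tv\in C$ for every $t>0$, which forces $\langle p,v\rangle\leq0=\Phi(v)$ for $p\in C^{\circ}$.
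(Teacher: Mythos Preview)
Your argument is correct and is the standard polar-duality route to this identity. The paper does not supply its own proof of this proposition; it simply quotes it as \cite[Proposition~1.6]{Mazon}, so there is no in-paper argument to compare your approach against.
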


\begin{prop}[\cite{Mazon}, Theorem 1.8] \label{prop:char_subdiff} 
    If $\Phi$ is convex, lower semi-continuous and positively homogeneous of degree $1$, then $p\in\partial\Phi(u)$ if and only if $\widetilde{\Phi}(p)\leq 1$ and $\Phi(u) = \left<p, u\right>$.
\end{prop}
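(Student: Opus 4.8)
The plan is to deduce the statement from Proposition~\ref{prop:subdiff_chara}, which already absorbs the homogeneity, once one has the identification $\partial\Phi(0)=\{p\in H\mid\widetilde{\Phi}(p)\le 1\}$ (this is what the remark preceding Proposition~\ref{prop:biconjugateOrder} asserts; I would establish it directly). I would first record that $\Phi(0)=0$: since $\Phi$ is proper there is $v_{0}$ with $\Phi(v_{0})<\infty$, and $1$-homogeneity gives $\Phi(v_{0}/n)=\Phi(v_{0})/n\to 0$, so lower semi-continuity forces $\Phi(0)\le 0$, while $\Phi\ge 0$ by hypothesis. Hence, unwinding the definition of the subdifferential at the origin, $p\in\partial\Phi(0)$ if and only if $\langle p,v\rangle\le\Phi(v)$ for every $v\in H$.

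The second step is to match the condition ``$\langle p,\cdot\rangle\le\Phi$ on $H$'' with ``$\widetilde{\Phi}(p)\le 1$''. For $v$ with $\Phi(v)=\infty$ the inequality $\langle p,v\rangle\le\Phi(v)$ is automatic; for $v$ with $0<\Phi(v)<\infty$ it reads exactly $\langle p,v\rangle/\Phi(v)\le 1$; and for $v$ with $\Phi(v)=0$, applying it to $\lambda v$ and using $1$-homogeneity yields $\lambda\langle p,v\rangle\le 0$ for all $\lambda>0$, i.e.\ $\langle p,v\rangle\le 0$, which is precisely what keeps the corresponding term in the supremum defining $\widetilde{\Phi}$ bounded by $1$ (under the usual convention $c/0=+\infty$ for $c>0$, $0/0=0$). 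Passing to the supremum over $v$ gives the equivalence. Equivalently, one may organize this through the Fenchel conjugate: $1$-homogeneity yields $\Phi^{*}(p)=\sup_{\lambda>0}\lambda\,\sup_{v\in H}\bigl(\langle p,v\rangle-\Phi(v)\bigr)$, which equals $0$ when $\langle p,\cdot\rangle\le\Phi$ (the inner supremum is then $\le 0$, and $=0$ at $v=0$) and $+\infty$ otherwise (a strict violation at some $v_{0}$ with $\Phi(v_{0})<\infty$ is amplified by scaling), so $\Phi^{*}=I_{K}$ with $K=\partial\Phi(0)=\{p\mid\widetilde{\Phi}(p)\le 1\}$.

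Finally I would assemble the two steps. By Proposition~\ref{prop:subdiff_chara}, $p\in\partial\Phi(u)$ if and only if $p\in\partial\Phi(0)$ and $\Phi(u)=\langle p,u\rangle$; combining with the previous step this reads exactly ``$\widetilde{\Phi}(p)\le 1$ and $\Phi(u)=\langle p,u\rangle$''. (Alternatively, once $\Phi^{*}=I_{K}$ is in hand, the Fenchel identity of Proposition~\ref{prop:FenchelIdentity} gives $p\in\partial\Phi(u)$ iff $\Phi(u)+I_{K}(p)=\langle p,u\rangle$; when $p\in K$ one already has $\langle p,u\rangle\le\Phi(u)$, so this is again $\widetilde{\Phi}(p)\le 1$ together with $\Phi(u)=\langle p,u\rangle$.) The only place requiring care, and hence the main obstacle, is the bookkeeping at the degenerate values $\Phi(v)\in\{0,\infty\}$ in the passage between ``$\langle p,\cdot\rangle\le\Phi$'' and ``$\widetilde{\Phi}(p)\le 1$'', together with the verification $\Phi(0)=0$ where lower semi-continuity enters; the rest is the homogeneity reduction.
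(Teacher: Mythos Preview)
The paper does not give its own proof of this proposition; it is quoted as Theorem~1.8 from \cite{Mazon} and used as a black box. Your argument is correct and is in fact the natural one: reduce via Proposition~\ref{prop:subdiff_chara} to the identification $\partial\Phi(0)=\{p\in H:\widetilde{\Phi}(p)\le1\}$, which the paper itself asserts without justification in the Remark preceding Proposition~\ref{prop:biconjugateOrder}. Your case analysis on the degenerate values $\Phi(v)\in\{0,\infty\}$ is the only delicate point, and you handle it correctly under the standard conventions; the appeal to lower semi-continuity and nonnegativity to obtain $\Phi(0)=0$ is exactly what is needed (and assuming properness is harmless, since if $\Phi\equiv\infty$ both sides of the equivalence are vacuously empty). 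In short, there is nothing to compare: you have supplied a self-contained proof where the paper defers to the reference, and your route through Proposition~\ref{prop:subdiff_chara} and the Fenchel identity is precisely how the cited result is typically established.
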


Along the line with the discussion of \cite[p.\ 15]{Mazon}, 
we are able to characterize the subdifferential $\partial C_\beta$ with the setting $H := L^2(\Omega)$ and $\Phi := C_\beta$. However, our definition of $C_\beta$ may allow itself to take a negative value. 
If so, we cannot directly apply Alter's method.
 We add a linear functional to reduce the problem for positive functionals.
 This is possible by Proposition \ref{prop:HahnBanach}.
%
%
\begin{rem}
    When $\Omega$ is the half space $\mathbb{R}^d_+ := \mathbb{R}^{d-1}\times (0,\infty)$, 
    Bellettini and Kholmatov proved non-negativity of $\mathcal{C}_\beta$ in \cite{BellettiniKholmatov2018}.
\end{rem}

We are now in the position to prove Theorem \ref{Main2}, which characterizes the subdifferential $\partial C_\beta(u)$.
\begin{proof}[Proof of Theorem \ref{Main2}]
Since we know by Proposition \ref{prop:HahnBanach} that $C_\beta(u)$ is convex, lower semi-continuous and positively homogeneous of degree $1$ in $L^2(\Omega)$, there is $f\in L^2(\Omega)$ such that $C_{\beta,f}(u)=C_\beta(u)+\int_\Omega fu\,dx\geq0$ for all $u\in L^2(\Omega)$ by Proposition \ref{prop:HahnBanach} and Remark \ref{rem:BiasCanbeZero}.
 To prove Theorem \ref{Main2}, we introduce a functional
\begin{multline*}
	\Psi_{\beta,f}(q) = \inf \bigl\{ \|z\|_\infty \bigm|
	q = -\operatorname{div}z + f \|z\|_\infty \ \text{in}\ \mathcal{D}'(\Omega),\ 
	\|z\|_\infty \beta = -[z \cdot \nu]\ \text{on}\ \partial\Omega\ \\
	\text{with}\ z \in \mathbf{X}_2(\Omega) \bigr\}.
\end{multline*}
A key step is to prove that $\widetilde{C_{\beta,f}}=\Psi_{\beta,f}$ which is rigorously stated as follows.
\begin{lem} \label{LDual}
Let $\Omega$ be a bounded $C^2$ domain in $\mathbb{R}^d$.
 Assume that $\|\beta\|_\infty\leq1$.
 Let $f\in L^2(\Omega)$ be taken such that $C_{\beta,f}\geq0$ in $L^2(\Omega)$.
 Then $\widetilde{C_{\beta,f}}=\Psi_{\beta,f}$.
\end{lem}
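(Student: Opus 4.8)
The plan is to prove the two inequalities $\widetilde{C_{\beta,f}} \le \Psi_{\beta,f}$ and $\widetilde{C_{\beta,f}} \ge \Psi_{\beta,f}$ separately, by unwinding the definition $\widetilde{C_{\beta,f}}(q) = \sup_{v}\langle q,v\rangle / C_{\beta,f}(v)$ using the Green's formula for the Anzellotti pair. For the inequality $\widetilde{C_{\beta,f}} \le \Psi_{\beta,f}$, I would take any admissible $z \in \mathbf{X}_2(\Omega)$ in the definition of $\Psi_{\beta,f}(q)$, so that $q = -\operatorname{div}z + f\|z\|_\infty$ in $\mathcal{D}'(\Omega)$ and $\|z\|_\infty\beta = -[z\cdot\nu]$ on $\partial\Omega$. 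Then for $v \in BV(\Omega)\cap L^2(\Omega)$ with $C_{\beta,f}(v) > 0$, I compute
\[
\langle q,v\rangle = -\int_\Omega v\operatorname{div}z\,dx + \|z\|_\infty\int_\Omega fv\,dx = \int_\Omega (z,Dv) - \int_{\pOmega}\gamma v\,[z\cdot\nu]\,\dH{d-1} + \|z\|_\infty\int_\Omega fv\,dx,
\]
using Green's formula. Since $-[z\cdot\nu] = \|z\|_\infty\beta$ and $|\int_\Omega(z,Dv)| \le \|z\|_\infty\int_\Omega|\nabla v|$ (a standard property of the Anzellotti pair), this gives $\langle q,v\rangle \le \|z\|_\infty\bigl(\int_\Omega|\nabla v| + \int_{\pOmega}\beta\gamma v\,\dH{d-1} + \int_\Omega fv\,dx\bigr) = \|z\|_\infty C_{\beta,f}(v)$, hence $\langle q,v\rangle/C_{\beta,f}(v) \le \|z\|_\infty$. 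Taking the supremum over $v$ and then the infimum over admissible $z$ yields $\widetilde{C_{\beta,f}}(q) \le \Psi_{\beta,f}(q)$. (One must also handle $v$ with $C_{\beta,f}(v) = 0$: nonnegativity of $C_{\beta,f}$ plus the same estimate forces $\langle q,v\rangle \le 0$, which is consistent with the convention for $\widetilde{\Phi}$.)

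For the reverse inequality $\widetilde{C_{\beta,f}} \ge \Psi_{\beta,f}$, which I expect to be the main obstacle, the natural route is a convex-duality / Hahn--Banach argument. Assume $\widetilde{C_{\beta,f}}(q) < \infty$; by the remark preceding Proposition \ref{prop:biconjugateOrder} this means $q \in K_{\beta,f} := \partial C_{\beta,f}(0)$, equivalently $(C_{\beta,f})^*(q) = 0$, i.e.\ $\langle q,v\rangle \le C_{\beta,f}(v)$ for all $v$ (after rescaling, assuming $\widetilde{C_{\beta,f}}(q) \le 1$). I then want to produce a vector field $z$ realizing $q$. The idea is to set up the constrained minimization defining $\Psi_{\beta,f}(q)$ as a dual problem: consider the linear functional $v \mapsto \langle q,v\rangle$ on $BV\cap L^2$, dominated by the sublinear functional $v \mapsto C_{\beta,f}(v)$; by Hahn--Banach it extends, but the cleaner approach (following \cite[p.~15]{Mazon} and the Anzellotti/Mazón machinery) is to apply a Fenchel--Rockafellar duality to the problem $\inf_v \{ \int_\Omega|\nabla v| + \int_{\pOmega}\beta\gamma v + \int_\Omega (f-q)v \}$, whose dual is exactly an optimization over pairs $(z,\text{constant})$ with $\operatorname{div}z$, normal trace, and the constant coupling $f$ to $\|z\|_\infty$ — giving the field $z$ with the required three properties and $\|z\|_\infty \le \widetilde{C_{\beta,f}}(q)$.

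The delicate points I anticipate: (i) the functional $C_{\beta,f}$ involves the boundary term $\int_{\pOmega}\beta\gamma u$, so the relevant trace/duality must be carried out on $\mathbf{X}_2(\Omega)$ with the normal-trace operator $[z\cdot\nu]$ of Anzellotti, and one needs the domain to be $C^2$ and bounded to have these traces and Green's formula available (this is where the lemma's hypotheses enter); (ii) the awkward coupling $q = -\operatorname{div}z + f\|z\|_\infty$ mixes $z$ with its own sup-norm, reflecting that we added the linear correction $f$ to force nonnegativity — handling this likely requires first solving for fixed target norm $M := \|z\|_\infty$, i.e.\ characterizing when $q - Mf = -\operatorname{div}z$ is solvable with $\|z\|_\infty \le M$ and $-[z\cdot\nu] = M\beta$, and then optimizing over $M$; (iii) existence of a minimizer / attainment of the infimum in $\Psi_{\beta,f}$ needs a weak-$*$ compactness argument in $L^\infty(\Omega;\mathbb{R}^d)$ together with closedness of the constraints under weak-$*$ convergence (the divergence constraint is closed in $\mathcal{D}'$, and the normal-trace constraint passes to the limit since $[\cdot,\nu]$ is weak-$*$ to weak-$*$ continuous on norm-bounded sets of $\mathbf{X}_2$ — or one argues only with the inequality $\Psi_{\beta,f} \le \widetilde{C_{\beta,f}}$ without attainment). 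I would isolate the existence of the dual field $z$ as the technical heart, modeled on \cite[Proof of Lemma 2]{CaffarelliMellet2007} and the treatment in \cite{Mazon}, and defer attainment if it is not strictly needed for Theorem \ref{Main2}.
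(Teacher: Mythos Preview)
Your treatment of the inequality $\widetilde{C_{\beta,f}}\leq\Psi_{\beta,f}$ is essentially the paper's argument: pick an admissible $z$, apply the Anzellotti Green formula, and bound $\langle q,v\rangle\leq\|z\|_\infty C_{\beta,f}(v)$.

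For the reverse inequality your route diverges from the paper's. You aim to produce, for a given $q$ with $\widetilde{C_{\beta,f}}(q)\leq 1$, a field $z$ directly via Hahn--Banach or Fenchel--Rockafellar duality, and you correctly flag the nonlinear coupling $q=-\operatorname{div}z+f\|z\|_\infty$ as the obstacle. The paper sidesteps this entirely: instead of constructing $z$ from $q$, it uses Propositions~\ref{prop:biconjugateOrder} and~\ref{PAdual} to reduce the task to the \emph{primal} inequality $C_{\beta,f}(u)\leq\widetilde{\Psi_{\beta,f}}(u)$, and then verifies this by exhibiting enough admissible test fields. Concretely, for any $\varepsilon>0$ one chooses $\delta$ so that $\int_{\Omega_\delta}|\nabla u|\geq\int_\Omega|\nabla u|-\varepsilon$, takes $z\in C_0^\infty(\Omega_\delta)$ approximating the total variation on $\Omega_\delta$, and then \emph{extends} $z$ across the boundary layer $\Omega\setminus\Omega_\delta$ so that the constraint $\|z\|_\infty\beta=-[z\cdot\nu]$ is met on $\partial\Omega$. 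This extension step is the paper's key device; it makes the boundary constraint harmless and avoids any abstract duality for the hard direction.

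Your approach could in principle be made to work, but the ``fix $M=\|z\|_\infty$ and optimize'' idea still requires you to show that the linear constraint set $\{z:\ -\operatorname{div}z=q-Mf,\ -[z\cdot\nu]=M\beta,\ \|z\|_\infty\leq M\}$ is nonempty for the correct $M$, which is precisely the existence statement you are trying to prove; a genuine Fenchel--Rockafellar argument would need a careful choice of primal/dual pairing on $\mathbf{X}_2(\Omega)$ that you have not specified. The paper's double-tilde reduction plus extension is both shorter and avoids the attainment issue you raise in (iii); indeed the paper explicitly remarks (Remark~\ref{RChar}) that attainment in $\Psi_{\beta,f}$ is unclear, and only confronts it later, in the proof of Theorem~\ref{Main2}, via a weak-$*$ limit of a minimizing sequence.
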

\begin{rem} \label{RChar}
It is not clear that the infimum in the definition of $\Psi_{\beta,f}$ is attained.
 This causes extra technical difficulty compared with the case $\beta=0$.
\end{rem}
We continue to prove Theorem \ref{Main2} admitting Lemma \ref{LDual}.
 Since $\widetilde{C_{\beta,f}}=\Psi_{\beta,f}$ by Lemma \ref{LDual}, Proposition \ref{prop:char_subdiff} yields
\[
	q \in \partial C_{\beta,f}(u) \Leftrightarrow \Psi_{\beta,f}(q) \leq 1 \quad\text{and}\quad
	C_{\beta,f}(u) = \int_\Omega qu\,dx.
\]
For a moment, we pretend that the infimum in the definition of $\Psi_{\beta,f}$ is attained.
 In this case, $\Psi_{\beta,f}(q)\leq 1$ is equivalent to saying that $\|z_0\|_\infty=\Psi_{\beta,f}(q)\leq1$, where $z_0$ satisfies $q=-\operatorname{div}z_0+f\|z_0\|_\infty$, $\|z_0\|_\infty\beta=-[z_0\cdot\nu]$ with $z_0\in\mathbf{X}_2(\Omega)$.
 The identity $C_{\beta,f}(u)=\int_\Omega qu\,dx$ becomes
\begin{align*}
	\int_\Omega |\nabla u| &+ \int_{\partial\Omega} \beta\gamma u\, d\mathcal{H}^{d-1}
	+ \int_\Omega fu\,dx 
	= \int_\Omega u(-\operatorname{div}z_0)\, dx 
	+ \|z_0\|_\infty \int_\Omega fu\,dx \\
	&= \int_\Omega (z_0,Du) - \int_{\Omega} [z_0\cdot\nu]\gamma u\, d\mathcal{H}^{d-1}
	+ \|z_0\|_\infty \int_\Omega fu\,dx
\end{align*}
if one uses the Anzellotti pair $(z_0,Du)$.
 This implies
\begin{equation} \label{EHOM}
	\int_\Omega |\nabla u| + \int_{\partial\Omega} \beta\gamma u\, d\mathcal{H}^{d-1}
	+ \int_\Omega fu\,dx
	= \int_\Omega (z_0,Du)
	+ \|z_0\|_\infty \left( \int_{\partial\Omega} \beta\gamma u\, d\mathcal{H}^{d-1} + \int_\Omega fu\,dx \right).
\end{equation}
By \cite[Theorem C.6]{Mazon} we know 
\[
	\left| \int_\Omega(z_0,Du) \right| 
	\leq \|z_0\|_\infty \int_{\Omega} |\nabla u|,
\]
which together with \eqref{EHOM} implies
\[
	C_{\beta,f}(u) \leq \|z_0\|_\infty C_{\beta,f}(u).
\]
Thus, unless $C_{\beta,f}(u)=0$, then $\|z_0\|_\infty=1$.
 In this case, by \eqref{EHOM} we have 
\[
	\int_\Omega |\nabla u| = \int_\Omega(z_0,Du).
\]

Since the infimum in the definition of $\Psi_{\beta,f}$ may not be attainable, the argument is more involved.
 Let $\{z_i\}\subset \mathbf{X}_2(\Omega)$ be a minimizing sequence of the infimum in the definition of $\Psi_{\beta,f}(q)$.
 We may assume that $z_i\rightharpoonup z$ $*$-weakly in $L^\infty(\Omega,\mathbb{R}^d)$ with some $z\in L^\infty(\Omega,\mathbb{R}^d)$.
 Since
\[
	\sup_i \left( \|z_i\|_\infty + \|\operatorname{div}z_i\|_{L^2(\Omega)} \right) < \infty,
\]
we conclude that $(z_i,Du)\rightharpoonup(z,Du)$ as measures by \cite[Theorem 4.1]{Anzellotti1983}.
 Integration by parts yields \eqref{EHOM} with $z_0=z_i$.
 As in the previous paragraph, we obtain
\begin{equation} \label{EApE}
	C_{\beta,f}(u) = \int_\Omega(z_i,Du) 
	+ \|z_i\| \left( \int_{\partial\Omega} \beta\gamma u\,d\mathcal{H}^{d-1} + \int_\Omega fu\,dx \right)
	\leq \|z_i\|_\infty C_{\beta,f}(u).
\end{equation}
If $C_{\beta,f}(u)>0$, this implies that $\|z_i\|_\infty\geq1$.
 Since $\Psi_{\beta,f}(q)\leq1$, $\|z_i\|_\infty$ must converge to $1=\Psi_{\beta,f}(q)$.
 The inequality \eqref{EApE} now implies that
\[
	\lim_{i\to\infty} \int_\Omega(z_i,Du) = \int_\Omega |\nabla u|,
\]
which yields $\int_\Omega(z,Du)=\int_\Omega|\nabla u|$.
 Since $\|\cdot\|_\infty$ is lower semi-continuous under $*$-weak convergence in $L^\infty(\Omega,\mathbb{R}^d)$, we conclude that
\[
	\|z\|_\infty \leq \varliminf_{i\to\infty} \|z_i\|_\infty = \Psi_{\beta,f}(q) = 1.
\]
We take any $\varphi\in C^\infty_0(\Omega)$. Testing $q = -\operatorname{div}{z_i} + f\|z_i\|_\infty$ by $\varphi$
and sending $i\to\infty$, we see that
\begin{multline*} 
    \int_\Omega q\varphi\,dx = \int_\Omega \varphi(-\operatorname{div}{z_i})\,dx + \|z_i\|_\infty\int_\Omega f\varphi\,dx = \int_\Omega z_i\cdot\nabla\varphi\,dx + \|z_i\|_\infty\int_\Omega f\varphi\,dx\\
    \longrightarrow\int_\Omega z\cdot\nabla\varphi\,dx + \int_\Omega f\varphi\,dx.
\end{multline*}
Here, the second equality follows from \cite[Proposition C.4]{Mazon} and the last convergence is deduced 
from $z_i\rightharpoonup z$ $*-$weakly in $L^\infty(\Omega,\mathbb{R}^d)$. 
Hence, we have $q = -\operatorname{div}{z} + f$ in $\mathcal{D}'(\Omega)$.
Meanwhile, for any $\varphi\in C^\infty(\Omega)$, we again test both $q = -\operatorname{div}{z_i} + f\|z_i\|_\infty$
and $q = -\operatorname{div}{z} + f$ by $\varphi$. Then, sending $i\to\infty$ yields 
\[
    \int_{\pOmega} \beta\gamma\varphi\,\dH{d-1} = \int_{\pOmega}\gamma\varphi(-[z\cdot\nu])\,\dH{d-1}
\]
due to $\|z_i\|_\infty\beta = -[z_i\cdot\nu]$ on $\pOmega$.
Since $\varphi$ is arbitrary, we see that $\beta = -[z\cdot\nu]$.
 The converse is easy to prove.
 We thus conclude that
\[
	q \in \partial C_{\beta,f}(u)
\]
is equivalent to saying that
\begin{gather*}
	q = -\operatorname{div}z + f, \quad
	\beta = -[z, \nu]\ \text{with}\ z \in \mathbf{X}_2(\Omega)\ \text{and}\ \|z\|_\infty \leq 1, \\
	\int_\Omega |\nabla u| = \int_\Omega(z,Du).
\end{gather*}
Since $\partial C_{\beta,f}(u)=\partial C_\beta(u)+f$, the characterization of $\partial C_\beta(u)$ in Theorem \ref{Main2} now follows provided that $C_{\beta,f}(u) > 0$.

If $C_{\beta,f}(u)=0$, then
\[
	C_{\beta,f}(u) = \int_\Omega qu\,dx
\]
implies
\[
	C_{\beta,f}(u) = \int_\Omega \alpha qu\,dx
\]
for all $\alpha\in\mathbb{R}$.
 Thus,
\[
	\partial C_{\beta,f}(u) = \left\{ \lambda q \in L^2(\Omega) \biggm|
	q \in L^2(\Omega)\ \text{and}\ \lambda \in [0,1]\ \text{with}\ \Psi_{\beta,f}(q) = 1\ \text{and}\ \int_\Omega qu\,dx = 0 \right\}.
\]
As we observed, $\Psi_{\beta,f}(q) = 1$ with $\int_\Omega qu\,dx =C_{\beta,f}(u)$ is equivalent to saying that there exists $z\in L^\infty(\Omega,\mathbb{R}^d)$ such that
\begin{gather*}
	q = -\operatorname{div}z + f, \quad
	\beta = -[z, \nu]\ \text{with}\ z \in \mathbf{X}_2(\Omega)\ \text{and}\ \|z\|_\infty \leq 1, \\
	\int_\Omega |\nabla u| = \int_\Omega(z,Du).
\end{gather*}
Thus, we obtain the desired characterization of $\partial C_\beta(u)$ in Theorem \ref{Main2}.

\end{proof}
\begin{proof}[Proof of Lemma \ref{LDual}]
The proof is similar to the case $\beta=0$, $f=0$ in \cite[Proposition 1.9]{Mazon}.
 If $\Psi_{\beta,f}(q)=\infty$, then $\widetilde{C_{\beta,f}}(q)\leq\Psi_{\beta,f}(q)$ so we may assume that $\Psi_{\beta,f}(q)<\infty$.
 Let $u\in L^2(\Omega)\cap BV(\Omega)$.
 For $q=-\operatorname{div}z+f\|z\|_\infty$, $\|z\|_\infty\beta=-[z\cdot\nu]$ with $z\in\mathbf{X}_2(\Omega)$, we observe that
\begin{align*} 
	&\int_\Omega uq\,dx &&\hspace{-6em}= \int_\Omega u \left( -\operatorname{div}z + f\|z\|_\infty \right)dx \\
	&&&\hspace{-6em}= \int_\Omega (z,Du) + \int_\Omega fu\,dx \|z\|_\infty
	+ \int_{\partial\Omega} \beta\gamma u\,d\mathcal{H}^{d-1}\|z\|_\infty \\
	&\leq \|z\|_\infty &&\hspace{-6em}\left( \int_\Omega |\nabla u| + \int_{\partial\Omega} \beta\gamma u\,d\mathcal{H}^{d-1} + \int_\Omega fu\,dx \right)
	= \|z\|_\infty C_{\beta,f}(u).
\end{align*}
Since $u\in BV(\Omega)\cap L^2(\Omega)$ is dense in $L^2(\Omega)$, taking supremum in $u$ implies that $\widetilde{C_{\beta,f}}(q)\leq\|z\|_\infty$.
The inequality $\widetilde{C_{\beta,f}}(q)\leq\Psi_{\beta,f}(q)$ now follows by taking the infimum of $\|z\|_\infty$.

For the converse inequality, it suffices to prove that  $C_{\beta,f}(u)\leq\widetilde{\Psi_{\beta,f}}(u)$ by Proposition \ref{prop:biconjugateOrder} and Proposition \ref{PAdual}.
 We may assume that $u\in L^2(\Omega)\cap BV(\Omega)$.
 We proceed
\[
	\widetilde{\Psi_{\beta,f}}(u)
	= \sup_{q\in L^2(\Omega)} \frac{\int_\Omega uq\,dx}{\Psi_{\beta,f}(q)}
	\geq \sup_{\substack{q\in L^2(\Omega)\\\Psi_{\beta,f}(q)<\infty}} \frac{\int_\Omega uq\,dx}{\Psi_{\beta,f}(q)}.
\]
If $\Psi_{\beta,f}(q)<\infty$, then we observed, for $q=-\operatorname{div}z+f\|z\|_\infty$, $\|z\|_\infty\beta=-[z\cdot\nu]$ with $z\in\mathbf{X}_2(\Omega)$,
\[
	\int_\Omega uq\,dx = \int_\Omega(z,Du)
	+ \int_\Omega fu\,dx \|z\|_\infty + \int_{\partial\Omega}\beta\gamma u\,d\mathcal{H}^{d-1} \|z\|_\infty.
\]
Thus,
\begin{align*}
	\widetilde{\Psi_{\beta,f}}(u) \geq &\sup_{q\in L^2(\Omega)} \frac{\int_\Omega uq\,dx}{\|z\|_\infty}
	\geq \sup \frac{1}{\|z\|_\infty}\int_\Omega (z,Du) \\
	& + \int_{\partial\Omega} \beta\gamma u\,d\mathcal{H}^{d-1} + \int_\Omega fu\,dx, 
\end{align*}
where the last supremum is taken for $z\in\mathbf{X}_2(\Omega)$ satisfying
\[
	\|z\|_\infty \beta = -[z\cdot\nu] \quad\text{on}\quad \partial\Omega.
\]
To conclude that $\widetilde{\Psi_{\beta,f}}(u)\geq C_{\beta,f}(u)$, it suffices to prove that
\[
	\sup \left\{ \frac{1}{\|z\|_\infty}\int_\Omega (z,Du) \biggm|
	z \in \mathbf{X}_2(\Omega),\ \|z\|_\infty \beta = -[z\cdot\nu] \ \text{on}\ \partial\Omega \right\} \geq \int_\Omega |\nabla u|.
\]
 Since $|\nabla u|$ is a Radon measure in $\Omega$, for $\varepsilon>0$ there is $\delta>0$ such that 
\[
	\int_{\Omega_{\delta}} |\nabla u| \geq \int_\Omega |\nabla u| - \varepsilon
\]
for $\Omega_\delta=\left\{x\in\Omega \bigm| \operatorname{dist}(x,\partial\Omega)>\delta \right\}$.
 For any $z\in C^\infty(\Omega_{\delta/2})$ which is compactly supported in $\Omega_\delta$, we are able to extend $z$ to $\Omega$ such that the extended $z$ satisfies $z\in\mathbf{X}_2(\Omega)$ with $\|z\|_\infty\beta=-[z\cdot\nu]$ on $\partial\Omega$.
 For such $z$ 
\[
	\int_{\Omega_\delta} (z,Du)
	= \int_{\Omega_\delta} v(-\operatorname{div}z)\, dx.
\]
We know that
\[
	\int_{\Omega_\delta} |\nabla u|
	= \sup_{z\in C_0^\infty(\Omega_\delta)}
	\frac{1}{\|z\|_\infty}\int_{\Omega_\delta} u(-\operatorname{div}z)\, dx.
\]
Thus,
\begin{multline*}
	\sup \left\{ \frac{1}{\|z\|_\infty}\int_\Omega (z,Du) \biggm|
	z \in \mathbf{X}_2(\Omega),\ \|z\|_\infty\beta=-[z\cdot\nu]\ \text{on}\ \partial\Omega \right\} \\
	\geq \operatornamewithlimits{sup}{\frac{1}{\|z\|_\infty}\int_{\Omega_\delta}u(-\operatorname{div}{z})\,dx}\geq\int_{\Omega_\delta} |\nabla u| \geq \int_\Omega|\nabla u| - \varepsilon,
\end{multline*}
where the second supremum is taken over $z\in C^\infty_0(\Omega)$ such that $\|z\|_\infty\beta = -[z\cdot\nu]$ on $\pOmega$.
Since $\varepsilon$ is arbitrary, we now conclude the desired inequality.
 Thus, we have proved that $\widetilde{\Psi_{\beta,f}}(u)\geq C_{\beta,f}(u)$.
\end{proof}

\section{Capillary Chambolle type scheme}\label{sec:CapillaryTypeChambolleScheme}
We will show that Chambolle's scheme is a concrete way to implement 
Almgren-Taylor-Wang's scheme.
In other words, we shall prove Theorem \ref{Main1}.
\begin{proof}[Proof of Theorem \ref{Main1}]
    Existence and uniqueness of the minimizer $w$ of $\chambolleEnergy{\beta}{h}$ follows 
    from strict convexity and the lower semi-continuity of the energy with respect to $L^2(\Omega)$. 
    The Euler-Lagrange inclusion of \eqref{eq:E_b_def} reads

    \begin{equation}
        \frac{w - \geodis{E_0}}{h} + \partial C_\beta(w) \ni 0.
    \end{equation}

    We set $p := (w - \geodis{E_0})/h$ for simplicity. 
    Then, we have $-p \in \partial C_\beta(w)$. 
    There exists an $M>0$ such that $|\geodis{E_0}| \leq M$.
    Then, we deduce from the maximal principle that $|w|\leq M$. 
    Set $F_s := \{w < s\}$ for each $s\in\mathbb{R}$. 
    Noting that $w(x) = M - \int_{w(x)}^Mds = M - \int_{-M}^M \chi_{F_s}ds$, we obtain 

    \begin{equation}\label{eq:C_b_1}
        C_\beta(w) = -\int_\Omega pw\,dx = \int_{-M}^M \int_\Omega p\chi_{F_s}\,dxds - M\int_\Omega p\,dx.
    \end{equation}

    The first equality is derived by $-p\in \partial C_\beta(w)$. 
    Since $-p\in \partial C_\beta(0)$, 
    it holds that $C_\beta(u) \geq - \int_\Omega pu dx$ for every $u\in L^2(\Omega)$. 
    In particular, substituting $1, -1\in L^2(\Omega)$ into this inequality gives

    \begin{equation*}
        \int_{\partial\Omega}\beta\,\dH{d-1} \geq -\int_\Omega p\,dx,\ \ -\int_{\partial\Omega}\beta\,\dH{d-1} \geq \int_\Omega  p\,dx
    \end{equation*}
    implying
    \begin{equation}\label{eq:integralOfp}
        \int_\Omega p\,dx = -\int_{\partial\Omega} \beta\,\dH{d-1} = 0.
    \end{equation}
    Here the average-free assumption on $\beta$ is invoked.

We next observe that
    \begin{equation}\label{eq:C_b_2}
        C_\beta(w) = \int_{-M}^M \left(\int_\Omega |\nabla\chi_{F_s}| - \int_{\partial\Omega}\beta\chi_{F_s}\dH{d-1}\right) ds.
    \end{equation}
    To get \eqref{eq:C_b_2}, we have used the co-area formula with respect to BV functions:
    \begin{equation}
        \int_\Omega |\nabla w| = \int_{-\infty}^\infty \int_\Omega |\nabla\chi_{F_s}|
    \end{equation}
    Moreover, since $\chi_{F_s}\equiv 1$ for all $s\in(M,\infty)$ and $\chi_{F_s}\equiv 0$ 
    for all $s\in(-\infty,-M)$, we deduce
    \begin{equation*}
        \int_\Omega|\nabla w| = \int_{-M}^M\int_\Omega|\nabla\chi_{F_s}|.
    \end{equation*}
For the term containing $\beta$, we have
    \begin{multline*}
        \int_{\pOmega}\beta w\,\dH{d-1} = -\int_{-\infty}^0\int_{\pOmega}\beta\chi_{\{w < s\}}\dH{d-1}ds + \int_0^\infty\int_{\pOmega}\beta\chi_{\{w > s\}}\dH{d-1}ds\\ 
        = -\int_{-M}^0\int_{\pOmega}\beta\chi_{\{w<s\}}\dH{d-1}ds + \int_0^\infty\int_{\pOmega}\beta(1-\chi_{\{w < s\}})\dH{d-1}ds \\
        = -\int_{-M}^0\int_{\pOmega}\beta\chi_{\{w<s\}}\dH{d-1}ds - \int_0^M\int_{\pOmega}\beta\chi_{\{w<s\}}\dH{d-1}ds = -\int_{-M}^M\int_{\pOmega}\beta\chi_{F_s}\dH{d-1}ds.
    \end{multline*}
Thus, the formula \eqref{eq:C_b_2} follows.

Combining \eqref{eq:C_b_1} and \eqref{eq:C_b_2} yields

    \begin{equation} \label{Edis}
        \int_{-M}^M C_{-\beta}(\chi_{F_s})\,ds = \int_{-M}^M\int_\Omega p\chi_{F_s} dxds.
    \end{equation}
Since $-p\in\subdiff{C_\beta}{w}$, we see that $p\in\subdiff{C_{-\beta}}{-w}\subset\subdiff{C_{-\beta}}{0}$.
     Thus, it follows that $C_{-\beta}(\chi_{F_s})\geq\int_\Omega p\chi_{F_s}dx$.
     Therefore, the identity \eqref{Edis} yields $C_{-\beta}(\chi_{F_s}) = \int_\Omega p\chi_{F_s}ds$ holds for a.e. $s\in[-M,M]$. 
     This can be rephrased as $p\in\partial C_{-\beta}(\chi_{F_s})$ by Proposition \ref{prop:char_subdiff} since $C_\beta$ is positively homogeneous of degree $1$.
     For such $s\in[-M,M]$, taking any $F\subset\Omega$, we obtain the estimate of the form:
     \begin{multline*}
         C_{-\beta}(\chi_F)\geq \int_\Omega p(\chi_F - \chi_{F_s})\,dx + C_{-\beta}(\chi_{F_s}) = \int_\Omega\frac{w-\geodis{E_0}}{h}\cdot(\chi_F - \chi_{F_s})dx + C_{-\beta}(\chi_{F_s}) \\
         = \int_\Omega\frac{w - s}{h}\cdot(\chi_F - \chi_{F_s})\,dx + \int_\Omega\frac{s - \geodis{E_0}}{h}\cdot(\chi_F - \chi_{F_s})\,dx + C_{-\beta}(\chi_{F_s}) \\
         \geq \int_{\Omega}\frac{s-\geodis{E_0}}{h}\chi_F\,dx - \int_{\Omega}\frac{s-\geodis{E_0}}{h}\chi_{F_s}\,dx + C_{-\beta}(\chi_{F_s}).
     \end{multline*}
     This leads
     \begin{equation*}
         C_{-\beta}(\chi_F) + \int_{\Omega\cap F}\frac{\geodis{E_0}-s}{h}\,dx \geq C_{-\beta}(\chi_{F_s}) + \int_{\Omega\cap F_s}\frac{\geodis{E_0}-s}{h}\,dx.
     \end{equation*}
     We set $E_s := \{\geodis{E_0} < s\}$ for each $s\in\mathbb{R}$. Noting that
     \begin{equation*}
         \int_{\Omega\cap(F\triangle E_s)}\frac{|\geodis{E_0} - s|}{h} = \int_{\Omega\cap F}\frac{\geodis{E_0} - s}{h}\,dx - \int_{\Omega\cap E_s}\frac{\geodis{E_0} - s}{h}\,dx,
     \end{equation*}
     $F_s$ turns out to be a minimizer of $F\mapsto C_{-\beta}(\chi_F) + \int_{\Omega\cap(F\triangle E_s)}|\geodis{E_0}-s|/h$.
     We can take a decreasing sequence $s_i\to 0$ as $i\to\infty$ such that
     $p\in\subdiff{C_{-\beta}}{\chi_{F_{s_i}}}$ for every $i\in\mathbb{N}$.
     Then, since $\chi_{F_{s_i}}\to\chi_{F_0}$ in $L^2(\Omega)$, the lower semi-continuity of $C_{-\beta}$ implies
     \begin{equation*}
         C_{-\beta}(\chi_{F_0}) \leq \liminf_{i\to\infty}C_{-\beta}(\chi_{F_{s_i}}) = \liminf_{i\to\infty}\int_\Omega p\chi_{F_{s_i}}dx = \int_\Omega p\chi_{F_0}dx.
     \end{equation*}
     Here, the last equality follows from pointwise convergence of $\chi_{F_{s_i}}$ to $\chi_{F_0}$ and the Lebesgue convergence theorem.
     The converse inequality is derived from $p\in\subdiff{C_{-\beta}}{0}$.
     Therefore, we conclude $p\in\subdiff{C_{-\beta}}{\chi_{F_0}}$ which leads $T^\beta_h(E_0) = \operatorname{argmin}_{F}\mathcal{A}_{-\beta}(F,E_0,1/h)$.
\end{proof}
In Theorem \ref{Main1}, $\beta$ cannot be constant
owing to the restriction $\int_\Omega \beta\dH{d-1} = 0$. 
Moreover, we are not sure that the given $\beta$ exactly describes the desired contact angle condition
because we do not know the position of $\partial^*E_0\cap\pOmega$.
Thus, we have to define $\beta$ as $-\cos{\theta(0,\cdot)}$ in a neighbor of the 
boundary $\partial E_0\cap\pOmega$ of the hypersurface, and we set $\beta$ as a constant
so that $\int_\Omega\beta\dH{d-1} = 0$.
Subsequently, we rigorously state how to implement Chambolle's scheme 
with capillary functional. \newline

Let $E_0\subset\Omega$ be a Caccioppoli set in $\Omega$, and suppose that $\mathcal{H}^{d-1}(\partial^*E_0\cap\pOmega) = 0$ and $\partial^*E_0\cap\partial\Omega$ is not dense in $\partial\Omega$.
Let $T > 0$ be a time horizon and $h>0$ be a time step. Then, the time interval $[0,T]$ is split into $N$ sub-intervals,
where $N\in\mathbb{N}$ and $T = hN$. Suppose that $\theta:[0,T]\times\pOmega\to[0,\pi]$ is given.
Then, we define a function $\beta_{h,0}$ as follows:
\begin{equation*}
    \beta_{h,0} := 
    \begin{cases}
        -\cos{\theta(0,\cdot)}\ \ \mbox{if}\ \ \mathcal{N}_{h,0},\\
        \frac{\int_{\mathcal{N}_{h,0}}\cos{\theta(0,\cdot)}\dH{d-1}}{\mathcal{H}^{d-1}(\pOmega) - \mathcal{H}^{d-1}(\mathcal{N}_{h,0})}\ \ \mbox{on}\ \ \pOmega\backslash\mathcal{N}_{h,0},
    \end{cases}
\end{equation*}
where $\mathcal{N}_{h,0}\subset\pOmega$ is a neighborhood of $\partial^*E_0\cap\pOmega$ such that $\partial\Omega\backslash\mathcal{N}_{h,0}$ has positive $\mathcal{H}^{d-1}$-measure on $\partial\Omega$.

Let $w_{h,0}\in L^2(\Omega)\cap BV(\Omega)$ be the unique minimizer of 
the energy $\chambolleEnergy{\beta_{h,0}}{h}$. Then, we set $T_h(E_0) := \{ w_{h,0} < 0\}$.
Next, $\beta_{h,1}$ is defined in terms of $T_h(E_0)$ as follows:

\begin{equation*}
    \beta_{h,1} := 
    \begin{cases}
        -\cos{\theta(\frac{1}{h},\cdot)}\ \ \mbox{if}\ \ \mathcal{N}_{h,1},\\
        \frac{\int_{\mathcal{N}_{h,1}}\cos{\theta(\frac{1}{h},\cdot)}\dH{d-1}}{\mathcal{H}^{d-1}(\pOmega) - \mathcal{H}^{d-1}(\mathcal{N}_{h,1})}\ \ \mbox{on}\ \ \pOmega\backslash\mathcal{N}_{h,1},
    \end{cases}
\end{equation*}
where $\mathcal{N}_{h,1}\subset\pOmega$ is a neighborhood of $\partial^*(T_h(E_0))\cap\pOmega$.
Then, we set $T_h^2(E_0) := \{ w_{h,1} < 0 \}$ where $w_{h,1}$ is the unique minimizer of $\chambolleEnergy{\beta_{h,1}}{h}$.
By the inductive step, we define $\mathcal{N}_{h,i}$, $\beta_{h,i}$ and $T^{i+1}_h(E_0)$ for $0\leq i\leq N-1$ with $T_h^1(E_0) := T_h(E_0)$ assuming that at each step $\mathcal{N}_{h,1}$ can be taken with the property that $\partial\Omega\backslash\mathcal{N}_{h,1}$ has positive $\mathcal{H}^{d-1}$-measure on $\partial\Omega$.

Under these notations, we define a time discrete evolution $E_h(t)$ of sets in $\Omega$ by:
\begin{equation*}
    E_h(t) := T_h^{\lfloor\frac{t}{h}\rfloor}(E_0)\ \ \mbox{for}\ \ t\in[0,T].
\end{equation*}

\begin{rem}
    The definition of $E_h$ depends not only on the choice of $h$, but also on $\mathcal{N}_{h,i}$.
    If one tends to prove the convergence of the proposed discrete scheme, then independence of 
    the choice of $\mathcal{N}_{h,i}$ should be shown as well.
    Though we expect that $E_h(t)$ will converge to a time evolution $E(t)$ in some sense,
    we do not provide any convergence result and leave it for future works. Alternatively,
    we shall carry out several numerical experiments to confirm that the proposed scheme works well and
    behaves as desired.
\end{rem}

\section{Numerical experiment}\label{sec:NumericalExperiments}
In this section, we show how the discrete scheme works through some examples. Our scheme consists of the following parts.
\begin{enumerate}
    \item Given an initial data $E\subset\Omega$, compute the signed distance $d_E$ in terms of fast marching algorithm.
    \item Solve the isotropic TV denoising problem with the initial data $d_E$ by the Split Bregman method. Let $w_E$ be a solution of it.
    \item Compute the zero sublevel set $E'$ of $w_E$ and set $E := E'$.
    \item Repeat the process from 1 to 3.
\end{enumerate}
Our goal in this section is to modify the scheme mentioned above so that it also works in the case where TV is replaced by the capillary functional $C_\beta$ for some $\beta\in L^\infty(\pOmega)$ and to verify its accuracy through some strong solutions to the mean curvature flow with contact angle condition.
 Nevertheless, we begin with a classical case to get in touch with the basic idea of this method. By the classical case, we mean that $E$ is sequentially compact in $\Omega$, that is $E\subset\subset\Omega$.

\subsection{How to derive distance function?}
To compute the signed distance function $d_E$ numerically, we put collocation points $\mathbf{X}_i\ (1\leq i\leq N)$ and regard $E$ as the polygon $\cup_{i=1}^N[\mathbf{X}_{i-1},\mathbf{X}_i]$ with solid. For short, this polygon will be still denoted by $E$. Then, we can judge whether each point $x$ in $\Omega$ is included in the polygon or not by investigating the winding number of the polygon around $x$. In this way, we have the discrete function $d_E:\{1,\cdots,N_y\}\times\{1,\cdots,N_x\}\to\{-1,1\}$ defined by
\begin{equation*}
    d_E(i,j) :=
    \begin{cases}
        1\ \ \mbox{if}\ \ (x_j,y_i)\in E, \\ 
        -1\ \ \mbox{if}\ \ (x_j,y_i)\notin E.
    \end{cases}
\end{equation*}
Then, we obtain the approximate values of $d_E$ at mesh points near $\partial E$ by applying the argument done in \S 3.1 in \cite{Chambolle2004}. As stated in \cite{Chambolle2004}, we are now in the position to start the fast marching algorithm to determine the approximate values of $d_E$ far from $\partial E$. To this end, we have utilized "FastMarching.jl", a library of Julia developed by Hellemo [Github; hellemo/FastMarching.jl; accessed; 2023 May 21]. FastMarching.jl accepts coordinate of mesh points nearby $\partial E$ and returns distance between each mesh point and $\partial E$. After that, we finally update the sign of each $d_E(i,j)$ by checking the sign of $w_E(i,j)$. In the second iteration, we do not have to calculate the winding number of $E$ because we already know the level set function $w_E$.

\subsection{Split Bregman method}
Let us recall the Split Bregman method first proposed by Goldstein and Osher \cite{GoldsteinOsher2009}. Their scheme aims to solve problems that are categorized in the class of $L^1$ regularized optimization problem. Before applying their scheme to our problem, let us briefly review the proposed scheme. Let $f$ be a given data and $\mu >0$, they considered the following energy minimizing problem:

\begin{equation}\label{eq:energy_split_bregman}
    \min_{u}\TV{\Omega}{u} + \frac{\mu}{2}\|u - f\|^2_2.
\end{equation}
Note that this quantity is nothing but the energy to be minimized in Chambolle's scheme if one chooses $\mu := \frac{1}{h}$ and $f:=d_E$ for some given initial data $E\subset\Omega$. In our problem, this corresponds to the case where $\beta\equiv 0$. The idea of the Split Bregman method is to divide the variable $u$ of \eqref{eq:energy_split_bregman} into two portions $u$ and $\mathbf{d} = (d_x,d_y) :=\nabla u$ and to solve alternatively the following problem:
\begin{equation}\label{eq:energy_split_bregman_2}
    \min_{u,d_x,d_y}\int_\Omega|\mathbf{d}| + \frac{\mu}{2}\|u - f\|^2_2 + \frac{\lambda}{2}\|d_x - u_x\|^2_2 + \frac{\lambda}{2}\|d_y - u_y\|^2_2.
\end{equation}
The last two terms are regarded as a constraint $\mathbf{d} = \nabla u$ and \eqref{eq:energy_split_bregman_2} is an unconstrained problem.
 Note that the problem under consideration is represented as the sum of $L^1$ and $L^2$ terms. The minimizer $u$ is approximated by a sequence $\{\iter{u}{k}{}\}_{k}$ of functions that are generated an iterate step. To this end, setting $\iter{u}{0}{} := f$ and $\iter{d_x}{0}{} = \iter{d_y}{0}{} = \iter{b_x}{0}{} = \iter{b_y}{0}{} = 0$, $\iter{u}{k}{}$, $\iter{d_x}{k}{}$, $\iter{d_y}{k}{}$, $\iter{b_x}{k}{}$ and $\iter{b_y}{k}{}$ $(k\in\mathbb{N})$ are determined by the following equality:
\begin{equation}\label{eq:minimizing_d}
    \iter{\mathbf{d}}{k}{} := \argmin_{\mathbf{d}}|\mathbf{d}| + \frac{\lambda}{2}\|\mathbf{d} - \nabla \iter{u}{k-1}{} - \iter{b}{k-1}{}\|^2_2
\end{equation}
and
\begin{equation}\label{eq:minimizing_u}
    \iter{u}{k}{} := \argmin_{u}\frac{\mu}{2}\|u - f\|^2_2 + \frac{\lambda}{2}\|\iter{\mathbf{d}}{k-1}{} - \nabla u - \iter{b}{k-1}{}\|^2_2.
\end{equation}
The operator to derive $d^{(k)}$ from $\nabla u^{(k-1)}$, $b^{(k-1)}$ is often called a shrinking operator, and it can be calculated explicitly without differentiating $|d|$.
 In the concrete procedure in numerical computation, we assume that $\Omega = (\alpha_x,\alpha_y)\times(\beta_x,\beta_y)\subset\mathbb{R}^2$ for some $\alpha_x < \beta_x$ and $\alpha_y < \beta_y$ and the functions $u$, $d_x$, $d_y$, $b_x$ and $b_y$ are defined on mesh points $(x_j,y_i)$ $(1\leq j\leq N_x, 1\leq i\leq N_y)$ of $\Omega$ where $N_x$ and $N_y$ are the number of meshes along the $x$-axis and the $y$-axis, respectively and $x_j := \alpha_x + \frac{(\beta_x - \alpha_x)j}{N_x}$ and $y_i := \alpha_y + \frac{(\beta_y - \alpha_y)i}{N_y}$.
 For simplicity, we write $u_{i,j} := u(x_j, y_i)$. 
Then, the minimizers $\iter{\mathbf{d}}{k}{i,j} = (\iter{d_x}{k}{i,j},\iter{d_y}{k}{i,j})$ and $\iter{u}{k}{i,j}$ of  \eqref{eq:minimizing_d} and \eqref{eq:minimizing_u} can be explicitly computed as follows:
\begin{equation*}
    \iter{d_x}{k}{i,j} = \frac{\iter{s}{k-1}{i,j}\lambda(\nabla_x\iter{u}{k-1}{i,j} + \iter{b_x}{k-1}{i,j})}{\iter{s}{k-1}{i,j}\lambda + 1},
    \iter{d_y}{k}{i,j} = \frac{\iter{s}{k-1}{i,j}\lambda(\nabla_y\iter{u}{k-1}{i,j} + \iter{b_y}{k-1}{i,j})}{\iter{s}{k-1}{i,j}\lambda + 1},
\end{equation*}
\begin{equation}\label{eq:DiscreteEulerLagrange}
    \iter{u}{k}{i,j} = \iter{G}{k-1}{i,j},
    \iter{b_x}{k}{i,j} = \iter{b_x}{k-1}{i,j} + (\nabla_x\iter{u}{k}{i,j} - \iter{d_x}{k}{i,j}),
    \iter{b_y}{k}{i,j} = \iter{b_y}{k-1}{i,j} + (\nabla_y\iter{u}{k}{i,j} - \iter{d_y}{k}{i,j}).
\end{equation}
Here, we have set
\begin{equation*}
    \iter{s}{k-1}{i,j} := \sqrt{|\nabla_x\iter{u}{k-1}{i,j} + \iter{b_x}{k-1}{i,j}|^2 + |\nabla_y\iter{u}{k-1}{i,j} + \iter{b_y}{k-1}{i,j}|^2}, \\
\end{equation*}
and
\begin{multline}\label{eq:EulerLagrangeInBulkSplitBregmanDiscrete}
    \iter{G}{k}{i,j}  := \frac{\lambda}{\mu\Delta x^2 + 4\lambda}\left(\iter{u}{k-1}{i+1,j} + \iter{u}{k-1}{i-1,j}+\iter{u}{k-1}{i,j+1} + \iter{u}{k-1}{i,j-1}\right) \\+ \frac{\Delta x^2}{\mu\Delta x^2 + 4\lambda}(\mu f_{i,j} + \lambda(\nabla_x(\iter{d_x}{k-1}{i,j} - \iter{b_x}{k-1}{i,j}) + \nabla_y(\iter{d_y}{k-1}{i,j} - \iter{b_y}{k-1}{i,j}))))
\end{multline}
where $\Delta x:= \frac{\beta_x - \alpha_x}{N_x} = \frac{\beta_y - \alpha_y}{N_y}$ and $\iter{b}{k}{}$ is determined in the course of Bregman iteration. 
The procedure \eqref{eq:DiscreteEulerLagrange} is repeated until the following criteria holds:

\begin{equation*}
    \sqrt{\sum_{i=1}^{N_y}\sum_{j=1}^{N_x}(\iter{u}{k}{i,j} - \iter{u}{k-1}{i,j})^2\Delta x^2} =: \|\iter{u}{k}{} - \iter{u}{k-1}{}\| \geq 10^{-3}.
\end{equation*}
The partial derivatives $\nabla_x u$ and $\nabla_y u$ are discretized by the central finite difference, namely
\begin{equation*}
    \nabla_x u_{i,j} := \frac{u_{i,j+1} - u_{i,j-1}}{2\Delta x},\ \nabla_y u_{i,j} := \frac{u_{i+1,j} - u_{i-1,j}}{2\Delta x}.
\end{equation*}
We have to care when the point $(i,j)$ is on $\pOmega$, that is the case either $i=1$, $i=N_y$, $j = 1$ or $j = N_x$. In such cases, the neighbor points $(i+1,j)$, $(i-1,j)$, $(i,j+1)$ and $(i,j-1)$ may not belong to $\Omega$. Then, we shall impose either the periodic boundary condition or Neumann boundary condition to the function $u$. This depends on the selection of the initial data $f$. We will decide which boundary condition should be used in each specific problem.

For derivation of the discrete scheme, we refer the reader to Split Bregman Isotropic TV Denoising \cite{GoldsteinOsher2009}. Therein, the mesh size $\Delta x$ is assumed to be $1.0$ so that the formulae seem much simpler than that of us.

\subsection{Closed curves}
We begin with classical cases, namely the case where an initial data with solid is fully included in $\Omega$. In this case, we always impose the periodic boundary condition to minimizers $u$ to be determined through our scheme. Precisely speaking, we assume that $u_{1,j} = u_{N_y,j}\ (1\leq j\leq N_x)$, $u_{i,1} = u_{i,N_x}\ (1\leq i\leq N_y)$. Curves presented below are zero level lines of minimizers which are derived through our scheme.
\subsubsection{Star shaped curve}
In this section, the initial curve is parameterized as $(0,2\pi)\ni t\mapsto ((3.0 + \sin{5t})\cos{t},(3.0 + \sin{5t})\sin{t})\in\mathbb{R}^2$. 
The hyperparameters are set as $\alpha_x := -5.0$, $\beta_x := 5.0$, $\alpha_y := -5.0$, $\beta_y := 5.0$, $N_x := 500$ and $N_y:=500$.
\begin{figure}[H]
    \begin{minipage}[b]{0.5\linewidth}
      \centering
      \includegraphics[keepaspectratio, scale=0.4]{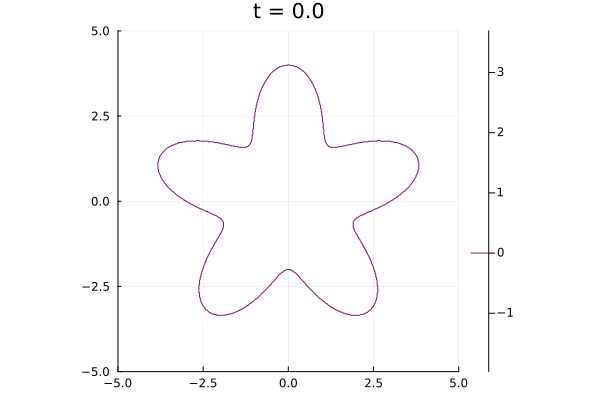}
    \end{minipage}
    \begin{minipage}[b]{0.5\linewidth}
      \centering
      \includegraphics[keepaspectratio, scale=0.4]{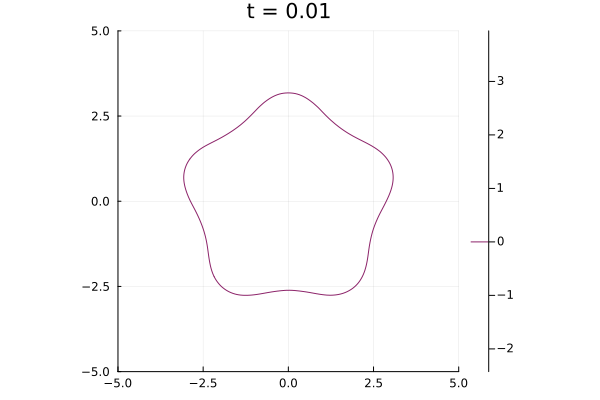}
    \end{minipage} \\
    \begin{minipage}[b]{0.5\linewidth}
      \centering
      \includegraphics[keepaspectratio, scale=0.4]{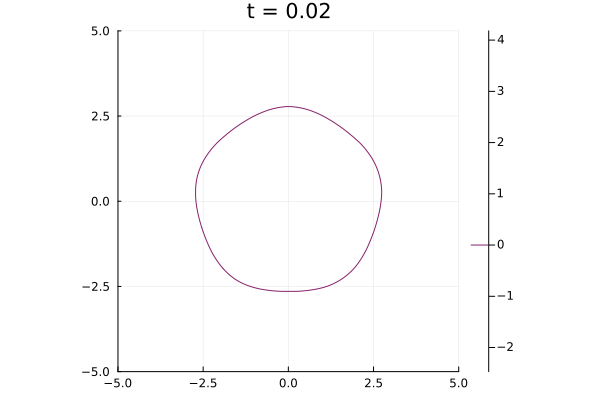}
    \end{minipage}
    \begin{minipage}[b]{0.5\linewidth}
      \centering
      \includegraphics[keepaspectratio, scale=0.4]{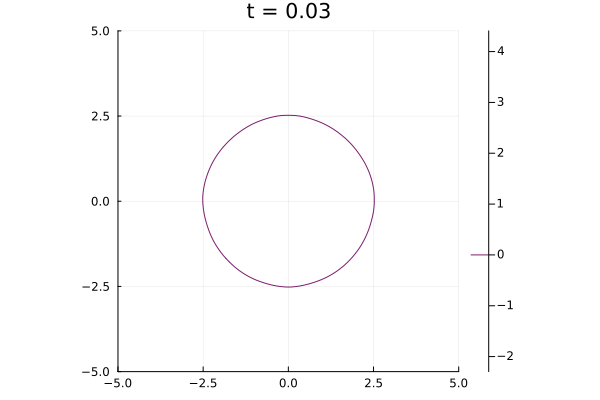}
    \end{minipage}
    \caption{Evolution of star shaped curve}\label{fig:projected}
\end{figure}

\subsubsection{Pi shaped curve}
We borrow the initial data of the pi curve from the website "https://ja.wolframalpha.com/". 
Since its parameterization is quite complicated, we do not cite it. 
We have multiply coordinates by $\frac{1}{240}$ to be included our $\Omega$. 
The hyperparameters used in this case are same as in the previous section, namely star shaped curve.

\begin{figure}[H]
    \begin{minipage}[b]{0.5\linewidth}
      \centering
      \includegraphics[keepaspectratio, scale=0.4]{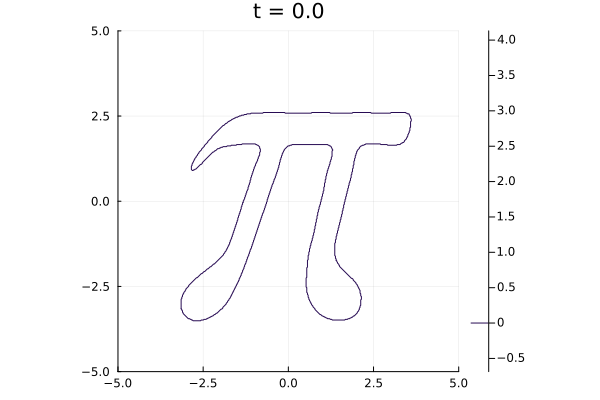}
    \end{minipage}
    \begin{minipage}[b]{0.5\linewidth}
      \centering
      \includegraphics[keepaspectratio, scale=0.4]{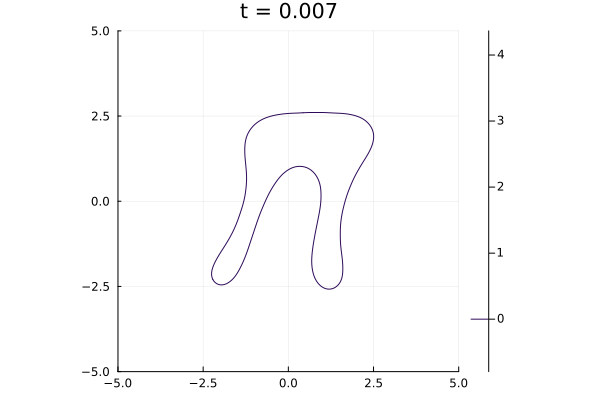}
    \end{minipage} \\
    \begin{minipage}[b]{0.5\linewidth}
      \centering
      \includegraphics[keepaspectratio, scale=0.4]{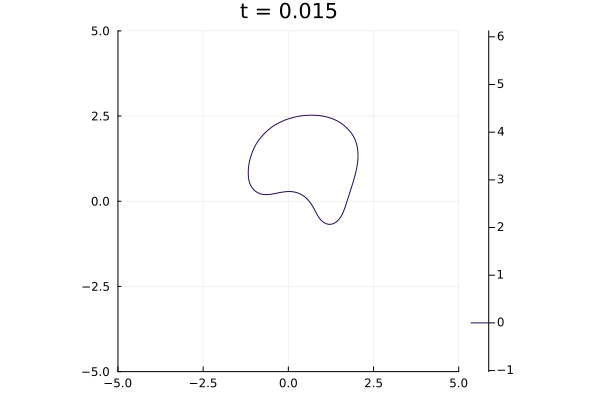}
    \end{minipage}
    \begin{minipage}[b]{0.5\linewidth}
      \centering
      \includegraphics[keepaspectratio, scale=0.4]{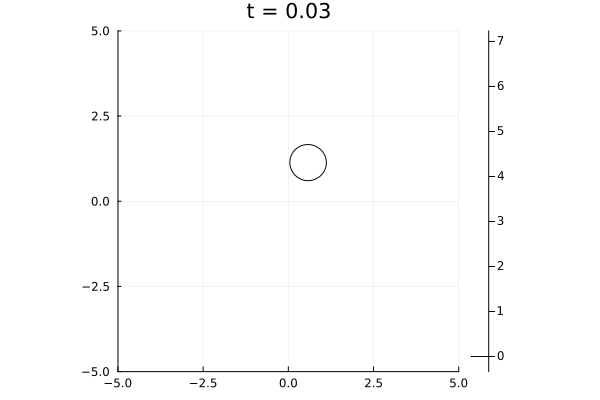}
    \end{minipage}
    \caption{Evolution of Pi shaped curve}\label{fig:projected}
\end{figure}

\section{Split Bregman method for capillary functional}
Let us explore a way to apply the Split Bregman method to our problem which is of the form:
\begin{equation*}
    \min_{u\in L^2(\Omega)}C_\beta(u) + \frac{\mu}{2}\|u - f\|^2_2.
\end{equation*}
Following the idea of the method, we prefer to split this problem into the following two sub-problems:
\begin{equation}\label{eq:minimizing_d_bdd}
    \mathbf{d} := \argmin_{\mathbf{d}}\int_\Omega|\mathbf{d}| + \frac{\lambda}{2}\|\mathbf{d} - \nabla u - b\|^2_2
\end{equation}
and
\begin{equation}\label{eq:minimizing_u_bdd}
    u := \argmin_{u}\int_{\pOmega}\beta\gamma u\,\dH{1} + \frac{\mu}{2}\|u - f\|^2_2 + \frac{\lambda}{2}\|\mathbf{d} - \nabla u - b\|^2_2.
\end{equation}
To this end, we recall the derivation of the equality \eqref{eq:EulerLagrangeInBulkSplitBregmanDiscrete}. This equality is nothing but discrete version of the following equality:
\begin{equation}\label{eq:EulerLagrangeInBulkSplitBregman}
    (\mu I - \lambda\Delta)u = \mu f - \lambda\partial_x(d_x - b_x) - \lambda\partial_y(d_y - b_y).
\end{equation}
The formula \eqref{eq:EulerLagrangeInBulkSplitBregman} is derived as the Euler-Lagrange equation of the energy to be minimized in \eqref{eq:minimizing_u}. Indeed, for any $\varphi\in C^\infty(\Omega)$, we calculate as follows:
\begin{multline*}
    \deriv{\varepsilon}\left(\int_{\pOmega}\beta\gamma(u+\varepsilon\varphi)\,\dH{1} +\frac{\mu}{2}\|u + \varepsilon\varphi - f\|^2_2 + \frac{\lambda}{2}\|\mathbf{d} - \nabla u - \varepsilon\nabla\varphi - \mathbf{b}\|^2_2 \right) = \\
    \mu\int_\Omega(u+\varepsilon\varphi-f)\varphi + \lambda\int_\Omega(d_x-u_x-\varepsilon\varphi_x-b_x)(-\varphi_x) + \lambda\int_\Omega(d_y-u_y-\varepsilon\varphi_y - b_y)(-\varphi_y) = \\
    \mu\int_\Omega(u+\varepsilon\varphi-f)\varphi + \lambda\int_\Omega\left\{\partial_x(d_x-\varepsilon\varphi_{xx}- b_x) - u_{xx}\right\}\varphi + \lambda\int_\Omega\left\{\partial_y(d_y - \varepsilon\varphi_{yy} - b_y) - u_{yy}\right\}\varphi.
\end{multline*}
Here, we have used integration by parts to get the second equality from the first one. 
Evaluating the above equality at $\varepsilon = 0$ and taking into account that $\varphi$ has been arbitrarily taken, we obtain \eqref{eq:EulerLagrangeInBulkSplitBregman}. Now, suppose that $\varphi$ is smooth up to $\pOmega$. Then, we again calculate the variation of the energy \eqref{eq:minimizing_u_bdd}. Since the formula \eqref{eq:EulerLagrangeInBulkSplitBregman} is valid in $\Omega$, integration by parts yields the following formula:
\begin{equation*}
    \int_{\pOmega}\beta\varphi\,\dH{1} - \lambda\int_{\pOmega}(d_x - u_x - b_x)n_x\varphi\,\dH{1} - \lambda\int_{\pOmega}(d_y-u_y-b_y)n_y\varphi\,\dH{1} = 0.
\end{equation*}
Here, $n_x$ and $n_y$ denote the first element and the second one of the outer normal vector to $\pOmega$, respectively. Since $\varphi$ is arbitrary, we have
\begin{equation}\label{eq:EulerLagrangeOnBddSplitBregman}
    \beta = \lambda(d_x - u_x - b_x)n_x + \lambda(d_y - u_y - b_y)n_y\ \ \mbox{on}\ \ \pOmega.
\end{equation}
\begin{rem}
    The formula \eqref{eq:EulerLagrangeOnBddSplitBregman} is somehow rational.   
    Actually, assume that $\Omega$ is a rectangular domain in $\mathbb{R}^2$ and
    $\partial E$ touches $\pOmega$ on the left wall with the angle $\theta$. 
    At the first step of Bregman iteration, $\iter{d}{0}{x} = \iter{b}{0}{x} = 0$ is assumed.
    Moreover, it holds that $n_x=-1$ and $n_y = 0$. Thus, if $\lambda = 1$, then $\beta = u_x$
    follows on the left wall. Because the interface is described as the zero level set of $u$,
    its outward unit normal vector is $\nabla u/|\nabla u|$. As $u$ is closed to 
    the signed distance function $d$, we expect $|\nabla u|\approx 1$. 
    By these observations, it likely holds that $\nabla u/|\nabla u|\cdot\mathbf{n} = \beta$.
    On the other hand, if the contact angle between $\partial E$ and $\pOmega$ is equal to $\theta$,
    we see that $\nabla u/|\nabla u|\cdot\mathbf{n} = -\cos{\theta}$. 
    Hence, $\beta = -\cos{\theta}$ is inferred. Thus, the formula \eqref{eq:EulerLagrangeOnBddSplitBregman} makes sense.
\end{rem}

\begin{rem}
    A similar argument is found in Appendix B \cite{ObermanOsherTakeiTsai2011}.
    Therein, integration by parts was used to obtain the boundary condition of the  
    Euler-Lagrange equation. Although, the boundary integral term did not appear.
    Eq. (3.12) in \cite{ObermanOsherTakeiTsai2011} corresponds to \eqref{eq:EulerLagrangeOnBddSplitBregman}
    when $\beta\equiv 0$.
\end{rem}

The Euler-Lagrange equations up to the boundary have been derived as in \eqref{eq:EulerLagrangeInBulkSplitBregman} and \eqref{eq:EulerLagrangeOnBddSplitBregman}. 
In the sequel, we solve this system numerically and see that the scheme works as expected.
A benchmark function is the solution to the following boundary value problem which was considered in \cite{AltschulerWu1993}:

\begin{equation}\label{eq:SolitonProblem}
    \begin{cases}
        u_t - (\arctan{u_x})_x = 0\ \ \mbox{in}\ \ \Omega\times[0,\infty), \\
        u_x = (\tan{\theta_l})^{-1}\ \ \mbox{on}\ \ \{x = \alpha_x\}\cap\pOmega\times[0,\infty), \\
        u_x = (\tan{\theta_r})^{-1}\ \ \mbox{on}\ \ \{x = \beta_x\}\cap\pOmega\times[0,\infty), \\
        u(0,\cdot) = u_0\in C^\infty(\closure{\Omega}),
    \end{cases}
\end{equation}
where $\theta_l$ denotes the angle between $(0,1)^T$ and $\{x = \alpha_x\}\cap\pOmega$;
$\theta_r$ denotes the angle between $(0,1)^T$ and $\{x = \beta_x\}\cap\pOmega$.
Take a look at the following figure to understand the setting:

\begin{figure}[H]
    \centering
    \includegraphics[keepaspectratio, scale=0.25]{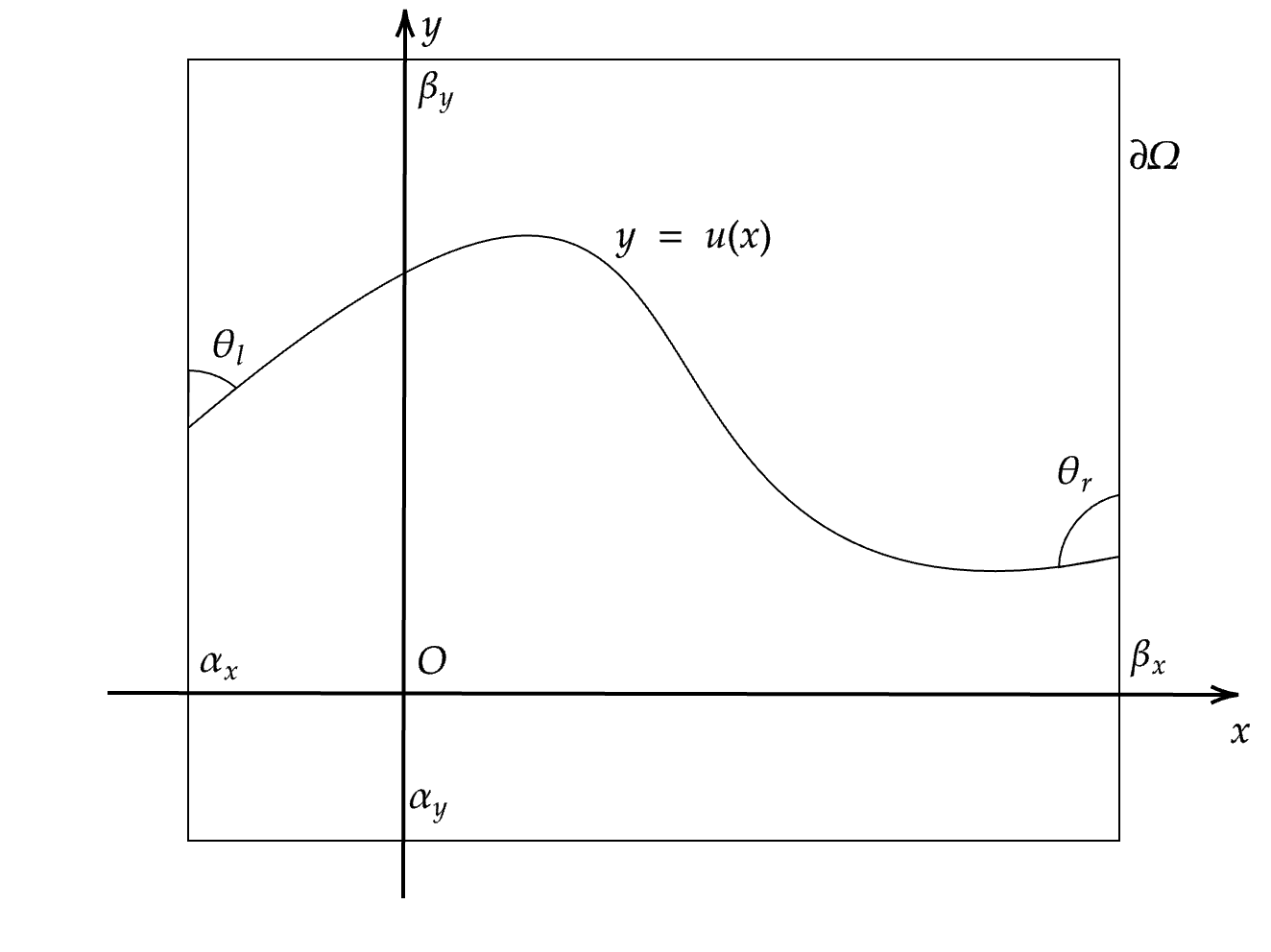}
    \caption{Contact angle problem}\label{fig:contact_angle_pde}
\end{figure}

We shall explain how to treat the boundary part of the system, namely \eqref{eq:EulerLagrangeOnBddSplitBregman}. 
It is convenient to be able to get the discrete values of $u$ on the boundary $\pOmega$ in the formula \eqref{eq:DiscreteEulerLagrange}. 
However, to calculate the quantities $\iter{u}{k}{1,j}, \iter{u}{k}{N_y,j}$ for $1\leq j\leq N_x$ and $\iter{u}{k}{i,1}, \iter{u}{k}{i,N_x}$ for $1\leq i\leq N_y$, we need the values $\iter{u}{k-1}{0,j}, \iter{u}{k-1}{N_y+1,j}, \iter{u}{k-1}{i,1}$ and $\iter{u}{k-1}{i,N_x+1}$ which is fictional. It is a common way to consider that this kind of imaginary points exist and to compute these values in terms of an imposed Neumann boundary condition, that is to say the formula \eqref{eq:EulerLagrangeOnBddSplitBregman}. At first, consider the case where mesh points $(x_j, y_i)$ are on the edge of $\pOmega$. Then, By the central difference method, $\partialDeriv{u}{x}{i}{j}$ and $\partialDeriv{u}{y}{i}{j}$ are computed as follows:
\begin{multline}\label{eq:FictionalCentralDifference}
    \partialDeriv{u}{x}{i}{1} \approx\frac{u_{i,2} - u_{i,0}}{\Delta x},\ \ \partialDeriv{u}{x}{i}{N_x} \approx\frac{u_{i,N_x + 1} - u_{i,N_x-1}}{\Delta x},\\ \partialDeriv{u}{y}{1}{j} \approx \frac{u_{2,j} - u_{0,j}}{\Delta x},\ \ \partialDeriv{u}{y}{N_y}{j} \approx \frac{u_{N_y+1,j} - u_{N_y-1,j}}{\Delta x}.
\end{multline}
On the other hand, the formula \eqref{eq:EulerLagrangeOnBddSplitBregman} yields 
\begin{multline}\label{eq:FictionalBoundaryCondition}
    \partialDeriv{u}{x}{i}{1} = \partialDeriv{d}{x}{i}{1} - \partialDeriv{b}{x}{i}{1} + \frac{1}{\lambda}\beta_{i,1}, \ \ \partialDeriv{u}{x}{i}{N_x} = \partialDeriv{d}{x}{i}{N_x} -\partialDeriv{b}{x}{i}{N_x} - \frac{1}{\lambda}\beta_{i,N_x}, \ \ \\ \partialDeriv{u}{y}{1}{j} = \partialDeriv{d}{y}{1}{j} - \partialDeriv{b}{y}{1}{j} +\frac{1}{\lambda}\beta_{1,j}, \partialDeriv{u}{y}{N_y}{j} = \partialDeriv{d}{y}{N_y}{j} - \partialDeriv{b}{y}{N_y}{j} - \frac{1}{\lambda}\beta_{N_y,j}.
\end{multline}
Combining \eqref{eq:FictionalCentralDifference} and \eqref{eq:FictionalBoundaryCondition} gives
\begin{multline}\label{eq:FictionalEdgeFormula}
    u_{i,0} = u_{i,2} - 2\Delta x(\partialDeriv{d}{x}{i}{1} - \partialDeriv{b}{x}{i}{1} + \frac{1}{\lambda}\beta_{i,1}),\ \ u_{i,N_x+1} = u_{i,N_x-1} + 2\Delta x(\partialDeriv{d}{x}{i}{N_x} - \partialDeriv{b}{x}{i}{N_x} - \frac{1}{\lambda}\beta_{i,N_x}),\\ \ \ u_{0,j} = u_{2,j} - 2\Delta x(\partialDeriv{d}{y}{1}{j} - \partialDeriv{b}{y}{1}{j} + \frac{1}{\lambda}\beta_{1,j}),\ \ u_{N_y+1,j} = u_{N_y-1,j} + 2\Delta (\partialDeriv{d}{y}{N_y}{j} - \partialDeriv{b}{y}{N_y}{j} - \frac{1}{\lambda}\beta_{N_y,j}).
\end{multline}
Here, to derive \eqref{eq:FictionalBoundaryCondition}, we note that $\partialDeriv{n}{x}{i}{1} = \partialDeriv{n}{y}{1}{j} = -1$, $\partialDeriv{n}{x}{i}{N_x} = \partialDeriv{n}{y}{N_y}{j} = 1$ and $\partialDeriv{n}{x}{1}{j} = \partialDeriv{n}{x}{N_y}{j} = \partialDeriv{n}{y}{i}{1} = \partialDeriv{n}{y}{i}{N_x} = 0$ for $2\leq i\leq N_y-1$ and $2\leq j\leq N_x-1$. Secondly, let us obtain the discrete values at the corner points $(1,1),(1,N_x),(N_y,1)$ and $(N_y,N_x)$. Since $\pOmega$ is not smooth at these points, we artificially assume that $\mathbf{n}_{1,1} = (-1/\sqrt{2},-1/\sqrt{2}),\mathbf{n}_{1,N_x} = (1/\sqrt{2},-1/\sqrt{2}),\mathbf{n}_{N_y,1} = (-1/\sqrt{2},1/\sqrt{2})$ and $\mathbf{n}_{N_y,N_x} = (1/\sqrt{2},1/\sqrt{2})$. Then, by a similar argument as above, we have
\begin{multline}\label{eq:FictionalCornerFormula}
    u_{N_y,0} + u_{N_y+1, 1} = \\u_{N_y,2} + u_{N_y-1,1} + 2\Delta x\left\{-\partialDeriv{d}{x}{N_y}{1} + \partialDeriv{b}{x}{N_y}{1} + \partialDeriv{d}{y}{N_y}{1} - \partialDeriv{b}{y}{N_y}{1} - \frac{\sqrt{2}}{\lambda}\beta_{N_y,1}\right\},\\
    u_{N_y,N_x+1} + u_{N_y+1,N_x} = \\u_{N_y,N_x-1} + u_{N_y-1,N_x} + 2\Delta x\left\{\partialDeriv{d}{x}{N_y}{N_x} - \partialDeriv{b}{x}{N_y}{N_x} + \partialDeriv{d}{y}{N_y}{N_x} - \partialDeriv{b}{y}{N_y}{N_x} -\frac{\sqrt{2}}{\lambda}\beta_{N_y,N_x}\right\},\\
    u_{0,N_x} + u_{1,N_x+1} =\\ u_{2,N_x}+u_{1,N_x-1} + 2\Delta x\left\{\partialDeriv{d}{x}{1}{N_x} - \partialDeriv{b}{x}{1}{N_x} - \partialDeriv{d}{y}{1}{N_x} + \partialDeriv{b}{y}{1}{N_x} -\frac{\sqrt{2}}{\lambda}\beta_{1,N_x}\right\},\\
    u_{0,1} + u_{1,0} = u_{2,1} + u_{1,2} - 2\Delta x\left\{\partialDeriv{d}{x}{1}{1} - \partialDeriv{b}{x}{1}{1} + \partialDeriv{d}{y}{1}{1} + \partialDeriv{b}{y}{1}{1} + \frac{\sqrt{2}}{\lambda}\beta_{1,1}\right\}.
\end{multline}
We substitute the formulae \eqref{eq:FictionalEdgeFormula} and \eqref{eq:FictionalCornerFormula} into \eqref{eq:DiscreteEulerLagrange} if we encounter needs for computing boundary points on $\pOmega$. 
In this way, we carry out numerical experiment for the boundary contact case. 
As an initial data, we select the curve whose zero level set is the graph of:

\begin{equation*}
    u(x) := \frac{4}{\pi}\log{\left|\cos{\left(-\frac{\pi}{4}x + \frac{\pi}{4}\right)}\right|} + \frac{1}{2} + \frac{2}{\pi}\log{2}\ \ \mbox{for}\ \ x\in[0,2].
\end{equation*}

Observe that the graph of $u$ does not change its shape when it is evolved by
curvature. This kind of curves is often called the Grim Reaper 
or the translating soliton in the literature.
The hyperparameters are set as $\alpha_x := 0.0, \beta_x := 2.0, \alpha_y := 0.0, \beta_y := 1.0, N_x := 800, N_y := 400, h := 0.5\Delta x^2, \mu := 1/h$ and $\lambda := 1.0$. $\beta$ is defined as follows: 
$\beta_{i,1} = \beta_{i,N_x} = -1/\sqrt{2}$ for $1\leq i\leq N_y$, $\beta_{1,j} = 1/\sqrt{2}$ for $2\leq j \leq N_x-1$ and $\beta_{N_y,j} = 0.0$ for $2\leq j\leq N_x-1$. 
Note that the setting of $\beta$ implies the interface (in our case, it is the zero level set of $u$) should intersect $\pOmega$ 
with the angle $\pi / 4$. See the following figures and confirm that the shape is totally preserved and moves downward.

\begin{figure}[H]
    \begin{minipage}[b]{0.5\linewidth}
      \centering
      \includegraphics[keepaspectratio, scale=0.35]{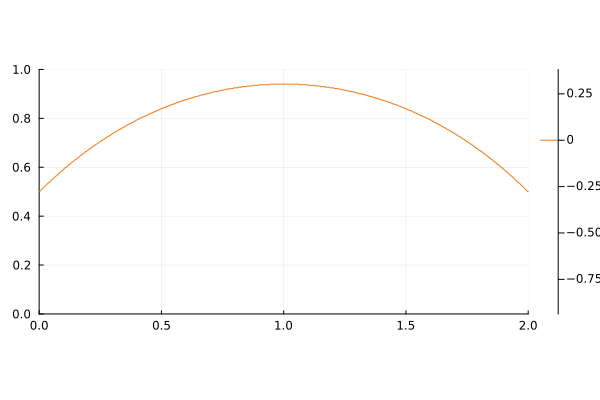}
    \end{minipage}
    \begin{minipage}[b]{0.5\linewidth}
      \centering
      \includegraphics[keepaspectratio, scale=0.35]{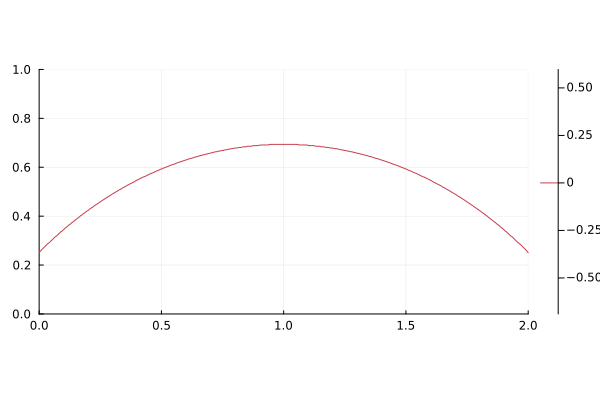}
    \end{minipage} \\
    \begin{minipage}[b]{0.5\linewidth}
      \centering
      \includegraphics[keepaspectratio, scale=0.35]{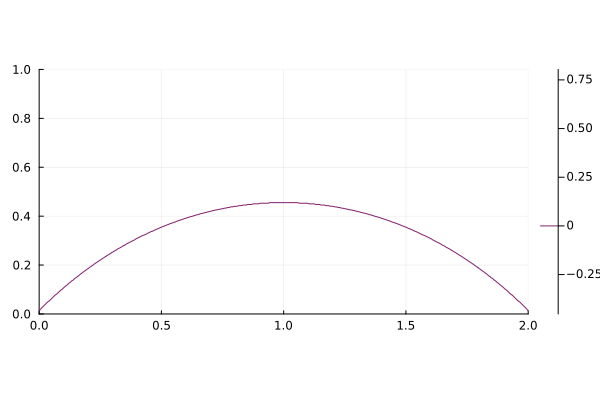}
    \end{minipage}
    \begin{minipage}[b]{0.\linewidth}
      \centering
      \includegraphics[keepaspectratio, scale=0.35]{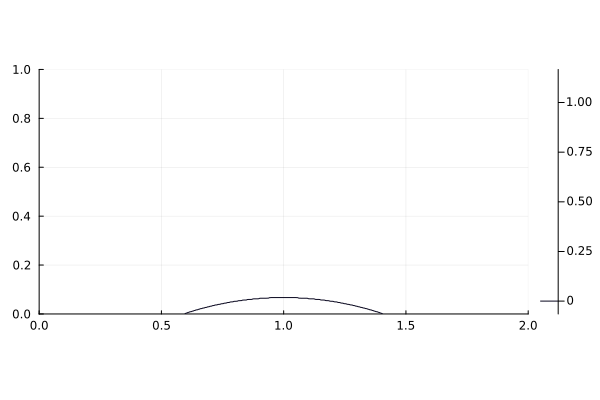}
    \end{minipage}
    \caption{Evolution of translating soliton}\label{fig:projected}
\end{figure}

According to Corollary 1.3 \cite{AltschulerWu1993}, if $\theta_l + \theta_r = \pi$, then
the evolution of the curve converges to a straight line as $t\to\infty$.
To confirm this fact in the numerical experiment, we choose the initial data as:

\begin{equation*}
    u(x) := \frac{1}{4}\sin{(\pi x)}+ \frac{1}{2}\ \ \mbox{for}\ \ x\in[0,2].
\end{equation*}

The hyperparameters are set as $\alpha_x := 0.0, \beta_x := 2.0, \alpha_y := -2.0, \beta_y := 1.0, N_x := 300, N_y := 450, h := 0.5\Delta x^2, \mu := 1/h$ and $\lambda := 1.0$.
$\beta$ is defined as follows: 

\begin{figure}[H]
    \begin{minipage}[b]{0.5\linewidth}
      \centering
      \includegraphics[keepaspectratio, scale=0.25]{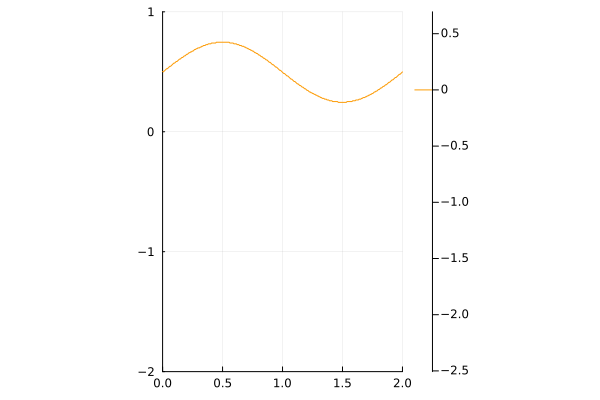}
    \end{minipage}
    \begin{minipage}[b]{0.5\linewidth}
      \centering
      \includegraphics[keepaspectratio, scale=0.25]{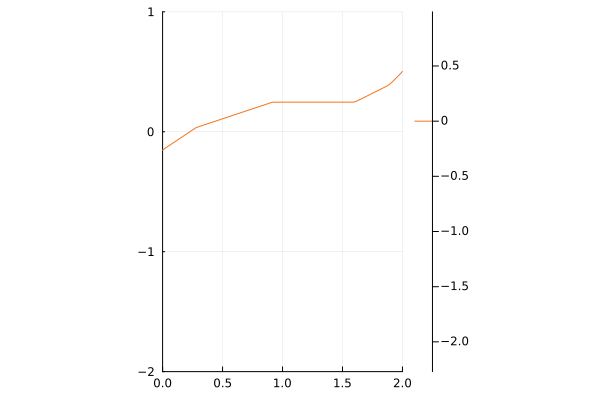}
    \end{minipage}
    \begin{minipage}[b]{0.5\linewidth}
      \centering
      \includegraphics[keepaspectratio, scale=0.25]{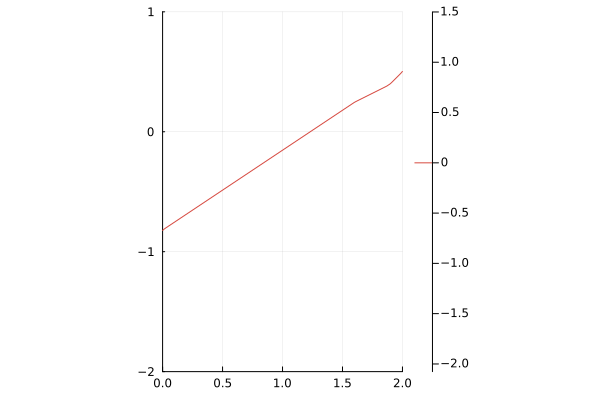}
    \end{minipage}
    \begin{minipage}[b]{0.5\linewidth}
      \centering
      \includegraphics[keepaspectratio, scale=0.25]{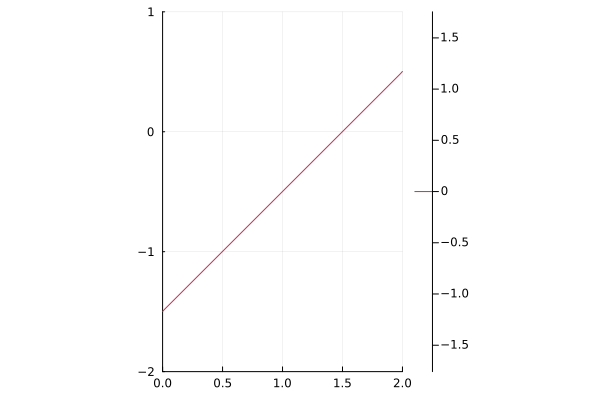}
    \end{minipage}
    \caption{Convergence to a straight line}\label{fig:projected}
\end{figure}
\section{Acknowledgements}
The authors are grateful to Professor Harald Garcke 
for his valuable comments for the problem setting of the prescribed contact angle condition.
The work of the second author was partly supported by the Japan Society 
for the Promotion of Science (JSPS) through the grants Kakenhi: 
No.~19H00639, No.~18H05323, No.~17H01091, and by Arithmer Inc.\ and Daikin Industries, Ltd.\ 
through collaborative grants.

\bibliography{cite}
\bibliographystyle{is-plain}
\end{document}